\newtheorem{definition}{Definition}
\newtheorem{theorem}{Theorem}
\newtheorem{corollary}{Corollary}
\newtheorem{lemma}{Lemma}
\newtheorem{proposition}{Proposition}
\newtheorem{remark}{Remark}
\title{Algebraic construction of higher order difference approximations for fractional derivatives and applications}
\author{H. M. Nasir\thanks{nasirh@squ.edu.om}}
\author{K. Nafa\thanks{nkamel@squ.edu.om}}
\affil{Department of Mathematics and Statistics,
Sultan Qaboos University, Sultanate of Oman}
\begin{document}

\maketitle

A generalization of the Gr\"{u}nwald difference approximation for
fractional derivatives  in terms of a real sequence and its generating function is presented.
Properties of the generating function are derived for consistency and
order of accuracy for the approximation corresponding to the generator.
Using this generalization, some higher order Gr\"{u}nwald type approximations
are constructed and tested for numerical stability by using steady state fractional differential problems.
These higher order approximations are used in  Crank-Nicolson type numerical schemes to
approximate the solution of space fractional diffusion equations.
Stability and convergence of these numerical schemes are analysed and are supported by
numerical examples.\\

{\bf Keywords:} Fractional diffusion equation, Gr\"{u}nwald Approximation,
Generating function, Crank-Nicolson scheme.

\section{Introduction}

Fractional calculus has a history that goes back to L'Hospital,
Leibniz and Euler \cite{leibnitz1962letter, eulero1738progressionibus}.
A historical account of early works on factional calculus can be found,
for eg., in \cite{ross1977development}.
Fractional integral and fractional derivative are extensions of the integer
order integrals and derivatives to a real or complex order.
Various definitions of fractional derivatives have been proposed, among which
the Riemann-Liouville, Gr\"{u}nwald-Letnikov and Caputo derivative are common and established.
There are also definitions based on Laplace and Fourier transforms.
Each definition characterizes certain properties of the integer order derivatives.

Recently, fractional calculus found its way into the application domain in
science and engineering. The field of application includes, but not limited to,
oscillation phenomena \cite{mainardi1996fractional}, visco-elasticity \cite{bagley1983theoretical},
control theory \cite{vinagre2000some} and transport problem \cite{metzler2004restaurant}.
Fractional derivative is also found to be more suitable to describe
anomalous transport in an external field
derived from the continuous time random walk \cite{barkai2000continuous},
resulting in a fractional diffusion equation (FDE).
The FDE involves fractional derivative either in time, in space or in both  variables.

A finite difference type approximation for fractional derivative is the Gr\"{u}nwald
approximation obtained from the Gr\"{u}nwald-Letnikov definition.
Numerical experience and theoretical justifications have shown that
application of this approximation {\it as it is}
in the space fractional diffusion equation (SFDE) results in unstable solutions when explicit,
implicit and even the Crank-Nicolson (CN) schemes are used \cite{meerschaert2004finite}.
The latter two schemes are popular for their unconditional stability for
classical diffusion equations with integer order derivatives.
This peculiar phenomenon for the implicit and CN schemes is corrected and the
stability is restored when a shifted
form of the Gr\"{u}nwald approximation
is used \cite{meerschaert2004finite, meerschaert2006finite}.

The Gr\"{u}nwald approximation is known to be of first order with the space
discretization size $h$ in the shifted and non-shifted form and
is, therefore, useful only in first order schemes for the SFDE such as
explicit Euler (forward) and implicit Euler(backward) methods.
Since the CN approximation scheme is of second order in time step $\tau$,
Meerchaert et al. \cite{tadjeran2006second} used extrapolation improvement
for the space discretization to obtain a second order accuracy.
Subsequently, second order approximations for the space fractional
derivatives were obtained through some manipulations on the
first order Gr\"{u}nwald approximation.
Nasir et al. \cite{nasir2013second} obtained a second order accuracy through
a non-integer shift in the Gr\"{u}nwald approximation, displaying super convergence.
Convex combinations
of various shifts of the shifted Gr\"{u}nwald approximation were used to obtain
higher order approximation in Chinese schools \cite{tian2015class,hao2015fourth, zhou2013quasi}.
Zhao and Deng \cite{zhao2015series} extended the concept of super convergence to derive a series of  higher order approximations.

Earlier, Lubich \cite{lubich1986discretized} obtained higher order approximations for the
fractional derivative for orders up to 6 with no shifts involved.
Numerical experiments show that these approximations are also unstable for the
SFDE when the numerical methods mentioned above are used.
Shifted forms of these higher order approximations diminish the order to one,
making them unusable as Chen and Deng \cite{chen2014fourth, chen2014fourthSIAM} observed.
We give a simple proof of this observation from our main result.

The present authors generalized the concept of Gr\"{u}nwald difference approximation
by identifying it with representing generating functions. We establish properties
of this approximating generating function for consistency and order of accuracy. This generalization opens
a door for more choices of Gr\"{u}nwald type approximations.

In this paper, we
construct algebraically some higher order shifted Gr\"{u}nwald type approximations
for the fractional derivative. Interestingly, our newly constructed approximations turn out to be shifted extension counterparts of the Lubich formula.
We apply some of the approximations in the CN type
schemes in FDEs with justification of stability and convergence.

The rest of the paper is organized as follows. In Section \ref{DefinitionSec}, definitions and
notations are introduced. In Section \ref{MainSec}, the main results of generalization is presented. In Section \ref{ConstructSec}, higher order
approximations with shifts are constructed  with some numerical tests to select
suitable approximation methods.
In Section \ref{ApproxSec} some selected methods are applied
to device numerical schemes for space fractional diffusion equation.
Stability and convergence for the schemes
are analysed in Section \ref{StabilitySec}. Supporting
numerical results are presented in Section \ref{NumericalResultSec} and conclusion are drawn in Section \ref{ConclusionSec}.

\section{Definitions and Notations}\label{DefinitionSec}

Let  $L_1(\Omega) = \left\{ f | \int_{-\infty}^{\infty} |f(x)|dx < \infty \right\} $ denote
the space of Lebesgue integrable functions.
The Fourier transform (FT) of $f(x) \in L_1(\mathbb{R})$ is defined by
$
\mathfrak{F}(f(x))(\eta) =: \hat{f}(\eta) = \int_{-\infty}^{\infty} f(x)e^{-i\eta x}dx., \eta \in \mathbb{R} $.
The inverse FT is
$
\mathfrak{F}^{-1}({\hat f}(\eta))(x) = \int_{-\infty}^{\infty} {\hat f} (\eta)e^{i\eta x}d\eta = f(x).
$
The FT is linear: for $\alpha, \beta \in \mathbb{R}$, and $ f,g \in L_1(\mathbb{R}) $,
$
\mathfrak{F}(\alpha f(x)+\beta g(x))(\eta) = \alpha \hat{f}(\eta)+ \beta {\hat g}(\eta).
$
For a function $f$ at a point $x+\beta \in \mathbb{R}$,   the FT is given by
$
\mathfrak{F}(f(x+\beta))(\eta) = e^{i\beta x} \hat{f}(\eta).
$\\
For $\alpha \in \mathbb{R}$,  denote $ {\cal C}^{m+\alpha}(\mathbb{R}) =
\left\{ f \in L_1(\mathbb{R}) | \int_{-\infty}^{\infty}
(1+|\eta|)^{\alpha} {\hat f}(\eta) d\eta \right\} $.\\

The left and right Gr\"{u}nwald-Letnikov (GL)fractional derivatives are given respectively by

\begin{equation}\label{LeftGL}
  \;_{-\infty} D_x^\alpha f(x) = \lim_{h \rightarrow 0} \frac{1}{h^\alpha } \sum_{k=0}^\infty (-1)^k \binom{\alpha}{k} f(x-kh)
\end{equation}

\begin{equation}\label{RightGL}
  \;_x D_{\infty}^\alpha f(x) = \lim_{h \rightarrow 0} \frac{1}{h^\alpha } \sum_{k=0}^\infty (-1)^k \binom{\alpha}{k} f(x+kh)
\end{equation}

where $\binom{\alpha}{k} = \frac{\Gamma (\alpha +1)}{\Gamma (\alpha +1-k)k!} $.


The FT of the left and right fractional derivatives are
$\mathfrak{F}(\;_{-\infty}D_x^\alpha f(x) ) (\eta) =
(i\eta)^\alpha \hat{f}(x) $ and
$\mathfrak{F}(\;_xD_\infty^\alpha f(x) ) (\eta) =
(-i\eta)^\alpha \hat{f}(x) $ respectively \cite{podlubny1998fractional}.

For a fixed $h$, the finite difference type Gr\"{u}nwald approximations for the
left and right fractional derivatives
are obtained by simply dropping the limit in the GL definitions (\ref{LeftGL}) and
(\ref{RightGL}) as
\begin{equation}\label{LeftGrunwaldApprox}
  \delta_{-h}^\alpha f(x) = \frac{1}{h^\alpha } \sum_{k=0}^\infty g_k^{(\alpha)} f(x-kh), \qquad
  \delta_{+h}^\alpha f(x) = \frac{1}{h^\alpha } \sum_{k=0}^\infty g_k^{(\alpha)} f(x+kh),
\end{equation}
where
$g_k^{(\alpha)} = (-1)^k \binom{\alpha}{k}$ are the coefficients of the
Taylor series expansion of the generating function $(1-z)^\alpha $.
These coefficients can be computed by the recursive formula
$
 g_0^{(\alpha)}=1, \quad g_k^{(\alpha)}=\left( 1-\frac{\alpha+1}{k} \right) g_{k-1}^{(\alpha)} ,\quad k = 1,2,3,\cdots ,
$ and  satisfy the properties
$
g_1^{(\alpha)} = -\alpha \le 0 ,\hspace{5mm}
\sum_{k=0}^\infty g_k^{(\alpha)} = 0 ,\hspace{5mm}
\sum_{k=0}^M g_k^{(\alpha)}\le 0, \hspace{5mm} \forall M\ge 2.
$\\

When $f(x) $ is defined in the intervals $[a,b]$, it is zero-extended outside the interval to adopt these definitions of fractional derivatives and their approximations. The
sums are restricted to  finite up to $N$
which grows to infinity as $h \rightarrow 0$.
Often, $N$ is chosen to be $ N = \left[ \frac{x-a}{h} \right] $ and
$ N = \left[ \frac{b-x}{h} \right] $ for the left and right fractional derivatives  respectively to cover the sum up to the boundary of these domain intervals, where $[y]$ is the integer part of $y$. The left and right fractional derivatives, in this case, are denoted by $ \;_aD^\alpha_x $ and
$ \;_xD^\alpha_b $ respectively.

The Gr\"{u}nwald approximations are  of first order accuracy and display unstable solutions
in the approximation of SFDE by implicit and CN type schemes \cite{meerschaert2004finite}. As a remedy,
shifted forms of left and right Gr\"{u}nwald formulas with  shift $r$ are used:
\begin{equation}\label{LshiftedGApp}
    \delta_{\mp h,r}^\alpha f(x) = \frac{1}{h^\alpha } \sum_{k=0}^{N+r} g_k^{(\alpha)} f(x \mp (k-r)h), \quad  x > a,
\end{equation}

where
$N=\left[ \frac{x-a}{h} \right]$ or $N =
\left[ \frac{b-x}{h} \right]$ for left and right fractional derivatives respectively and
the upper limits of the summations have been adjusted to cover the shift $r$.

Meerchaert et al. \cite{meerschaert2004finite} showed that
for a shift $r = 1$ , the $\delta_{\mp h,1}^\alpha f(x)$
are again of first order approximations with unconditional stability restored
in implicit and CN type schemes for SFDEs.

For higher order approximations, Nasir et al. \cite{nasir2013second} derived a second order approximation by a non-integer shift, displaying super convergence.
\begin{equation}\label{Nasir2nd}
\delta_{-h,\alpha/2} f(x) = \;_aD_x^\alpha f(x) + O(h^2).
\end{equation}

Chen et al.\cite{tian2015class} used convex combinations
of different shifted Gr\"{u}nwald forms to obtain two order 2 approximations.
\begin{equation}\label{Tian2nd}
  \lambda_1 \delta_{-h,p} +\lambda_2 \delta_{-h,q}
  = \;_aD_x^\alpha f(x) + O(h^2), \qquad \lambda_1 +\lambda_2 = 1
\end{equation}
with $\lambda_1 = \frac{\alpha - 2q}{2(p-q)} $ and
$\lambda_2 = \frac{ 2p-\alpha }{2(p-q)} $  for
$(p,q) = (1,0) , (1,-1)$.
In the above, we stated the form for left fractional derivative only and analogous forms for right FD hold.

Hao et al. \cite{hao2015fourth} obtained a quasi-compact order 4 approximation
by incorporating the second order term in the fractional derivative.
All the above approximations were derived from the fundemental GL approximation with coefficients $g^{(\alpha)}_{k}$ of the generating function $(1-z)^\alpha$.

Higher order approximations were also obtained earlier by Lubich
\cite{lubich1986discretized} establishing a connection with the
characteristic polynomials of multistep methods for ordinary differential
equations.
Specifically,
if $ \rho(z), \sigma(z) $ are, the
characteristic polynomials of a multistep method of convergence order $p$ \cite{henrici1962discrete},
then $\left(\frac{\sigma(1/z)}{\rho(1/z)}\right)^\alpha $ gives the coefficients for the
Gr\"{u}nwald type approximation of same order for the fractional derivative of order $\alpha$.
From the backward multistep methods, Lubich \cite{lubich1986discretized} derived
approximations of orders up to order six in the form
$ L_p (z) = (\sum_{j=1}^p \frac{1}{j} (1-z)^j )^\alpha $.
The generating functions
$L_p(z)$ given in Table \ref{LubichHighOrder} are of order $p$ for $ 1\le p \le 6 $ for the non-shift Gr\"{u}nwald type approximations.

\begin{table}[h]
  \centering
  \begin{tabular}{ll}
    \hline
 $L_1(z) = \left(1-z\right)^\alpha   $ \\
 $L_2(z) = \left(\frac{3}{2} -2z+\frac{1}{2}z^2\right)^\alpha $ \\
 $L_3(z) = \left(
\frac{11}{6} -3z+\frac{3}{2}z^2-\frac{1}{3}z^3
\right)^\alpha  $ \\
 $L_4(z) = \left(
\frac{25}{12} -4z+3z^2-\frac{4}{3}z^3 + \frac{1}{4} z^4 \right)^\alpha $ \\
 $L_5(z) = \left(
\frac{137}{60} -5z+5z^2-\frac{10}{3}z^3 + \frac{5}{4} z^4 -\frac{1}{5}z^5
\right)^\alpha $ \\
 $L_6(z) = \left(
\frac{49}{20} -6z+\frac{15}{2}z^2-\frac{20}{3}z^3 + \frac{15}{4} z^4 -\frac{6}{5}z^5 +\frac{1}{6} z^6
\right)^\alpha $ \\
    \hline
  \end{tabular}
  \caption{Lubich approximation generating functions}\label{LubichHighOrder}
\end{table}

As noted for the non-shifted Gr\"{u}nwald approximation, these higher order
approximations also display unstable solutions with implicit Euler and CN type schemes. Moreover, The shifted form of these approximations suffer the orders dropping to one and hence uninteresting.

\section{Higher order shifted approximations}\label{MainSec}
In this section, we present the generalization of the Gr\"{u}nwald approximation established in \cite{nasir2017AnewSecond} with important results are given with proof for completion.

For a function $f(x)$,
denote the left and right Gr\"{u}nwald type operator  with shift $r$ and weights $w_{k,r}^{(\alpha)}$,  respectively, as
\begin{equation}\label{GrunwaldOp}
\Delta_{\mp h,r}^\alpha f(x) = \frac{1}{h^\alpha } \sum_{k=0}^\infty
w_{k,r}^{(\alpha)} f(x \mp (k-r) h).
\end{equation}

\begin{definition}\label{def1}
A sequence $ \left\{ w_{k,r}^{(\alpha)} \right\} $ of real numbers is said to approximate the fractional derivatives
$\;_{-\infty} D_x^\alpha  f(x) $ and
 $\;_x D_\infty^\alpha  f(x) $  at $x$ with shift $r$ in the sense of Gr\"{u}nwald if
\begin{equation}\nonumber\label{consistency1}
\;_{-\infty} D_x^\alpha  f(x) =
\lim_{h \rightarrow 0} \Delta_{-h,r}^\alpha f(x),
\qquad
\;_x D_{\infty}^\alpha  f(x) =
\lim_{h \rightarrow 0} \Delta_{+h,r}^\alpha f(x).
\end{equation}

\end{definition}

\begin{definition}\label{def2}
A sequence $ \left\{ w_{k,r}^{(\alpha)} \right\} $ of real numbers is said
to approximate the fractional derivatives $\;_{-\infty} D_x^\alpha  f(x) $ and
 $\;_x D_\infty^\alpha  f(x) $
with shift $r$  and order $p \ge 1$ if
\begin{equation}
\;_{-\infty} D_x^\alpha  f(x) =
\Delta_{-h,r}^\alpha f(x) + O(h^p),
\qquad
\;_x D_\infty^\alpha  f(x) =
\Delta_{+h,r}^\alpha f(x) + O(h^p).
\end{equation}
\end{definition}

We denote the generating function of the coefficients
$w_{k,r}^{(\alpha)} $ as
\[
W(z) = \sum_{k=0}^{\infty} w_{k,r}^{(\alpha)} z^k.
\]

An equivalent characterization
of the generator $W(z)$ for an approximation of fractional differential operator
with order  $ p\ge 1$ and shift $r$ is given by

\begin{theorem}\label{Maintheorem}
Let $n-1 < \alpha \le n, \, m$ be a non-negative integer, $f(x)\in C^{m + n + 1}(\mathbb{R})$ and $ \;_\infty D_x^k f(x), \;_x D_\infty^k f(x) \in L^1(\mathbb{R})$ for $ 0 \le k \le m+n+1$. Then,
the generating function $W(z)$ of a real sequence $\{ w_{k,r}^{(\alpha)}\}$ approximates the left and right fractional differential
operators for $f(x)$ with  order $p$ and shift $r$, $1\le p\le m $, if and only if
\begin{equation}\label{Worderp}
   G_r(z) := \frac{1}{z^\alpha} W(e^{-z})e^{rz} = 1 + O(z^p).
\end{equation}
Moreover, if $ G_r(z) = 1 + \sum_{l=p}^{\infty} a_{l}(r) z^l $, then
we have for the left fractional derivative
\begin{equation}\label{TaylorFractional}
\Delta_{-h,r}^\alpha f(x) =
\;_{-\infty} D_x^\alpha f(x) + h^p a_p(r) \;_{-\infty} D_x^{p+\alpha} f(x)+\cdots + O(h^m),
\end{equation}
\begin{equation}\label{TaylorFractionalR}
\Delta_{+h,r}^\alpha f(x) =
\;_x D_\infty^\alpha f(x) + h^p a_p(r) \;_x D_\infty^{p+\alpha} f(x)+\cdots + O(h^m)
\end{equation}
\end{theorem}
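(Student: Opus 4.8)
The plan is to transfer the whole statement to the Fourier side, where the Gr\"{u}nwald operator becomes a multiplier and the fractional derivatives carry the symbols $(i\eta)^\alpha$ and $(-i\eta)^\alpha$. First I would apply $\mathfrak F$ to the left operator (\ref{GrunwaldOp}), using linearity together with the translation rule $\mathfrak F(f(\cdot+\beta))(\eta)=e^{i\beta\eta}\hat f(\eta)$ term by term with $\beta=-(k-r)h$, to obtain
\[
\mathfrak F\!\left(\Delta_{-h,r}^\alpha f\right)(\eta)=\frac{1}{h^\alpha}\,\hat f(\eta)\,e^{irh\eta}\sum_{k=0}^\infty w_{k,r}^{(\alpha)}e^{-ikh\eta}=\frac{1}{h^\alpha}\,\hat f(\eta)\,e^{irh\eta}W(e^{-ih\eta}).
\]
Substituting $z=ih\eta$, so that $e^{-ih\eta}=e^{-z}$, $e^{irh\eta}=e^{rz}$ and $h^{-\alpha}=(i\eta)^\alpha z^{-\alpha}$, collapses the right-hand side into the single multiplier identity
\[
\mathfrak F\!\left(\Delta_{-h,r}^\alpha f\right)(\eta)=(i\eta)^\alpha\,G_r(ih\eta)\,\hat f(\eta),
\]
which is the engine of the entire argument; the right operator is treated identically with $(-i\eta)^\alpha$ and $z=-ih\eta$.

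Next I would read both conclusions off this identity. Inserting the hypothesised expansion $G_r(z)=1+\sum_{l\ge p}a_l(r)z^l$ evaluated at $z=ih\eta$, each term becomes $a_l(r)h^l(i\eta)^{l+\alpha}\hat f(\eta)=a_l(r)h^l\,\mathfrak F(\;_{-\infty} D_x^{l+\alpha} f)(\eta)$, since $(i\eta)^l(i\eta)^\alpha=(i\eta)^{l+\alpha}$ is exactly the symbol of the left derivative of order $l+\alpha$. Applying $\mathfrak F^{-1}$ term by term and truncating then yields (\ref{TaylorFractional}), and the analogous computation on the right yields (\ref{TaylorFractionalR}). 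The equivalence (\ref{Worderp}) follows immediately: if $G_r(z)=1+O(z^p)$ the leading surviving term is $h^p a_p(r)\;_{-\infty} D_x^{p+\alpha} f$, so the approximation has order $p$; conversely, order $p$ forces $a_1=\cdots=a_{p-1}=0$, i.e.\ $G_r(z)=1+O(z^p)$. I would argue the converse contrapositively, noting that a nonzero $a_j(r)$ with $j<p$ produces an $h^j$ term that is a nonzero multiple of $\;_{-\infty} D_x^{j+\alpha} f$, incompatible with an $O(h^p)$ error.

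The genuinely technical steps, and the main obstacle, lie in the two analytic justifications compressed into ``term by term.'' The first is interchanging $\mathfrak F$ with the infinite Gr\"{u}nwald sum, which follows from dominated convergence once the summability of $\{w_{k,r}^{(\alpha)}\}$ and the integrability of $\hat f$ are in hand. The second, and harder, is upgrading the formal Taylor series of $G_r$ to a rigorous expansion with an $O(h^m)$ remainder: writing $G_r(ih\eta)=1+\sum_{l=p}^{m-1}a_l(r)(ih\eta)^l+R_m(ih\eta)$, one must bound $\int (i\eta)^\alpha R_m(ih\eta)\hat f(\eta)e^{i\eta x}\,d\eta$ uniformly in $x$. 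I expect to do this by splitting the frequency integral: on low frequencies the Taylor estimate $R_m(ih\eta)=O((h\eta)^m)$ applies directly, while on high frequencies the decay of $\hat f$ supplied by the regularity hypothesis $f\in C^{m+n+1}(\mathbb R)$ and the $L^1$ conditions on $\;_{-\infty} D_x^k f,\;_x D_\infty^k f$ for $0\le k\le m+n+1$ controls the tail against a weighted norm such as $\int (1+|\eta|)^{m+\alpha}|\hat f(\eta)|\,d\eta$. It is precisely here that the full strength of the smoothness and integrability hypotheses is consumed. A minor preliminary is to fix the branch of $(i\eta)^\alpha$ for $\eta<0$ consistently with the stated Fourier symbols of the fractional derivatives, after which the low- and high-frequency estimates combine to complete both directions.
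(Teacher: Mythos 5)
Your proposal follows essentially the same route as the paper's own proof: transform the Gr\"{u}nwald operator to the Fourier side, identify its symbol as $(i\eta)^\alpha G_r(i h\eta)\hat f(\eta)$ via the substitution $z=ih\eta$, expand $G_r$ and invert term by term to read off both the order criterion and the asymptotic expansion. The only difference is that you explicitly flag and sketch the two analytic justifications (interchange of $\mathfrak F$ with the infinite sum, and the uniform control of the Taylor remainder using the decay of $\hat f$ guaranteed by the smoothness and $L^1$ hypotheses) which the paper's proof passes over silently; this is a refinement of, not a departure from, the published argument.
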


\begin{proof}
We prove the result for the left fractional derivative.
Taking FT of $\Delta_{h,+r}^\alpha f(x) $ in (\ref{GrunwaldOp}),
with the help of linearity, we obtain
\begin{align*}
\mathfrak{F} (\Delta_{h,+r}^\alpha  f(x))(\eta)  & = \frac{1}{h^\alpha } \sum_{k=0}^\infty
w_{k,r}^{(\alpha)} \mathfrak{F}(f(x-(k-r) h))(\eta) \\
   & = \frac{1}{h^\alpha } \sum_{k=0}^\infty w_{k,r}^{(\alpha)}e^{-(k-r)ih\eta} \hat{f}(\eta)\\
   & = \frac{e^{rih\eta}}{(ih\eta)^\alpha }  \sum_{k=0}^\infty w_{k,r}^{(\alpha)} e^{-kih\eta}(i\eta)^\alpha \hat{f}(\eta)\\
   & = \frac{e^{rz}}{z^\alpha}  \left[  \sum_{k=0}^\infty w_{k,r}^{(\alpha)} (e^{-z})^k  \right]
   (i\eta)^\alpha \hat{f}(\eta)  \\
   & = \frac{e^{rz}}{z^\alpha} W(e^{-z}) (i\eta)^\alpha \hat{f}(\eta) = G_r(z) (i\eta)^\alpha \hat{f}(\eta)\\
   & = \sum_{l=0}^\infty a_{l}(r) z^l (i \eta)^\alpha {\hat f}(\eta)
    = \sum_{l=0}^\infty a_{l}(r) h^l (i \eta)^{l+\alpha} {\hat f}(\eta),
   \end{align*}
where we have used $z = i\eta h $.
Applying inverse FT, we have
\begin{equation}\label{MainApprox}
\Delta_{-h,r}^\alpha  f(x)
= \sum_{l=0}^{m-1} a_{l}(r)  \;_a D_x^{l+\alpha}  f(x) h^l +O(h^m).
\end{equation}
Now, (\ref{Worderp}) holds if and only if $ a_{0}(r) = 1, a_l(r) = 0$, for $ l = 1,2,\cdots, p-1$.
Moreover, (\ref{TaylorFractional}) holds from (\ref{MainApprox}) with (\ref{Worderp}).
\end{proof}

One of the consequence of Theorem \ref{Maintheorem} is the following consistency condition.

\begin{corollary}\label{Cor1}
If the generating function $W(z)$ gives a consistent approximation of the left and right fractional differential operator, then the following hold:
\begin{enumerate}
  \item $W(1)=0$,
  \item $\sum_{k=0}^\infty w_{k,r}^{(\alpha)} = 0$,
  \item The fractional derivative of constant is zero.
\end{enumerate}
\end{corollary}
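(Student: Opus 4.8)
The plan is to read off all three conclusions from the factorization $G_r(z) = z^{-\alpha} W(e^{-z}) e^{rz}$ supplied by Theorem \ref{Maintheorem}, once I note that ``consistent'' (Definition \ref{def1}) is precisely the order $p \ge 1$ case of Definition \ref{def2}. Then Theorem \ref{Maintheorem} applies with $G_r(z) = 1 + O(z)$; in particular $G_r(z) \to 1$ as $z \to 0$, and this single fact drives everything.

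First I would clear the denominator and write $W(e^{-z}) e^{rz} = z^\alpha G_r(z)$. Since $n-1 < \alpha \le n$ forces $\alpha > 0$, while $G_r(z) \to 1$ stays finite, the right-hand side tends to $0$ as $z \to 0$. Because $e^{rz} \to 1 \neq 0$, this forces $W(e^{-z}) \to 0$; as $z \to 0$ we have $e^{-z} \to 1$, so by continuity of $W$ at $z = 1$ I conclude $W(1) = 0$, which is item (1).

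Item (2) is then immediate from the definition $W(z) = \sum_{k=0}^\infty w_{k,r}^{(\alpha)} z^k$: evaluating at $z=1$ gives $\sum_{k=0}^\infty w_{k,r}^{(\alpha)} = W(1) = 0$. For item (3) I would substitute a constant $f \equiv c$ into the operator (\ref{GrunwaldOp}): every shifted sample equals $c$, so $\Delta_{\mp h, r}^\alpha c = \frac{c}{h^\alpha} \sum_{k=0}^\infty w_{k,r}^{(\alpha)} = \frac{c}{h^\alpha} W(1) = 0$ for every $h > 0$, whence the limit defining the fractional derivative is $0$ as well.

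The main obstacle is the analytic justification of the continuity step in item (1): passing from $W(e^{-z}) \to 0$ to $W(1) = 0$ presupposes that the power series $W(z)$ converges up to, or is continuous at, the boundary point $z = 1$, which is an Abel-type summability question rather than a purely formal one. I would resolve this using the coefficient hypotheses ensuring the series is well-behaved there, or by reading $W(1)$ as the Abel limit $\lim_{z \to 1^-} W(z)$. A secondary caveat is that constants do not lie in $L_1(\mathbb{R})$, so in item (3) the phrase ``fractional derivative of a constant'' must be understood in the formal Gr\"unwald--Letnikov sense, where the vanishing holds exactly for each $h$ independently of the $L_1$ setting.
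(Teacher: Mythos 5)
Your proposal is correct and follows essentially the same route as the paper: multiply the order condition $G_r(z)=1+O(z^p)$ through by $z^\alpha$, let $z\to 0$ to get $W(1)=0$, read off item (2) as the coefficient sum $W(1)$, and verify item (3) by direct substitution of a constant into the operator (\ref{GrunwaldOp}). Your added caveats about continuity of $W$ at the boundary point $z=1$ and about constants not lying in $L_1(\mathbb{R})$ are legitimate points the paper silently glosses over, but they do not change the argument.
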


\begin{proof}
1. Since the order $p$ is at least one, when $h\ne 0$, and hence $z\ne 0$,  the condition (\ref{Worderp}) becomes
  $W(e^{-z})e^{\pm rz} = z^\alpha ( 1 + O(z^p))$. Take limit as $h \rightarrow 0  (z \rightarrow 0) $.

  2. follows immediately with (\ref{Worderp}) and
  3. If $f(x) = C$, then (\ref{GrunwaldOp}) gives
  $\Delta_{\mp h,r} C = C \sum_{k=0}^{\infty} w^{(\alpha)}_{k,r} = 0$ and so are their limits as $h\rightarrow 0$.
\end{proof}

Using Theorem \ref{Maintheorem}, one can easily check algebraically the following propositions for the generating function of approximation operators known previously in
\cite{nasir2013second, tian2015class, lubich1986discretized, chen2014fourth}.

\begin{proposition}
The approximation given by (\ref{Nasir2nd}) is of second order accuracy.
\end{proposition}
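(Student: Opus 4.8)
The plan is to apply the criterion of Theorem~\ref{Maintheorem} directly. The approximation (\ref{Nasir2nd}) is simply the shifted Gr\"{u}nwald operator of (\ref{LshiftedGApp}) with the non-integer shift $r = \alpha/2$, so its weights are the classical Gr\"{u}nwald coefficients $w_{k,\alpha/2}^{(\alpha)} = g_k^{(\alpha)} = (-1)^k \binom{\alpha}{k}$, whose generating function is $W(z) = (1-z)^\alpha$. By Theorem~\ref{Maintheorem} it then suffices to verify that $G_{\alpha/2}(z) = z^{-\alpha} W(e^{-z})\, e^{(\alpha/2)z} = 1 + O(z^2)$, which forces the order to be exactly $p=2$ once we check the second-order coefficient does not vanish.

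First I would substitute $W(e^{-z}) = (1-e^{-z})^\alpha$ and strip off the branch behaviour at $z = 0$. The key algebraic step is the identity
\[
1 - e^{-z} = 2\, e^{-z/2} \sinh(z/2),
\]
which turns the generator into
\[
G_{\alpha/2}(z) = \frac{1}{z^\alpha}\Bigl(2\, e^{-z/2}\sinh(z/2)\Bigr)^\alpha e^{(\alpha/2)z}
= \left(\frac{2\sinh(z/2)}{z}\right)^{\!\alpha}
= \left(\frac{\sinh(z/2)}{z/2}\right)^{\!\alpha}.
\]
The crucial observation is that the shift $r = \alpha/2$ is precisely the value for which the factor $e^{(\alpha/2)z}$ cancels the $e^{-\alpha z/2}$ arising from $\bigl(e^{-z/2}\bigr)^\alpha$, so that all the odd-order asymmetry in the expansion is eliminated.

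To finish, I would expand $\sinh(w)/w = 1 + w^2/6 + w^4/120 + \cdots$, an even function of $w$, at $w = z/2$, giving
\[
G_{\alpha/2}(z) = \left(1 + \frac{z^2}{24} + O(z^4)\right)^{\!\alpha}
= 1 + \frac{\alpha}{24}\,z^2 + O(z^4) = 1 + O(z^2).
\]
Hence $a_1(\alpha/2) = 0$ while $a_2(\alpha/2) = \alpha/24 \neq 0$, and Theorem~\ref{Maintheorem} yields order exactly $2$, with (\ref{TaylorFractional}) identifying $\tfrac{\alpha}{24}\,{}_{-\infty}D_x^{2+\alpha}f(x)$ as the leading error term. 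I expect the only real obstacle to be spotting the $\sinh$ factorization of $1-e^{-z}$; once the exponential factors cancel, the evenness of $\sinh(w)/w$ makes the vanishing of the first-order term immediate, so there is no delicate term-by-term cancellation to track.
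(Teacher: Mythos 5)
Your proof is correct and follows the same route as the paper: both invoke Theorem~\ref{Maintheorem} and verify that $G_{\alpha/2}(z)=z^{-\alpha}(1-e^{-z})^\alpha e^{\alpha z/2}=1+O(z^2)$ by Taylor expansion. Your factorization $1-e^{-z}=2e^{-z/2}\sinh(z/2)$ is a cleaner way to organize that expansion than the paper's bare assertion, since it exposes exactly why the shift $r=\alpha/2$ kills the linear term and why only even powers survive. One substantive remark: the paper states the leading coefficient as $\alpha^2/24$, while you obtain $\alpha/24$; your value is the correct one, since $\bigl(1+z^2/24+O(z^4)\bigr)^{\alpha}=1+\tfrac{\alpha}{24}z^2+O(z^4)$, and in either case it is nonzero for $1\le\alpha\le 2$, so the conclusion of second-order (and not higher) accuracy stands.
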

\begin{proof}
The coefficients $g_k^{(\alpha)}$ have the generating function $(1-z)^\alpha$. Since the shift of the approximation is $r = \alpha/2 $, it is enough to check the function
$ G(z) = \frac{1}{z^\alpha}e^{z\alpha/2 }  (1-e^{-z})^\alpha $. Taylor series expansion gives that
$G(z) = 1 + \alpha^2/24 z^2+ O(z^4)$ which confirms the second order.
\end{proof}

\begin{proposition}
The approximation given by (\ref{Tian2nd}) is of second order accuracy.
\end{proposition}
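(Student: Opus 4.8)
The plan is to reduce the claim to a short Taylor-expansion computation built on the Fourier-transform identity derived in the proof of Theorem~\ref{Maintheorem}, rather than trying to fold the two distinct shifts into a single generating function. The two operators $\delta_{-h,p}$ and $\delta_{-h,q}$ share the \emph{same} coefficients $g_k^{(\alpha)}$, whose generating function is $W(z) = (1-z)^\alpha$; they differ only in their shift parameters $p$ and $q$. Hence, writing $z = i\eta h$, each satisfies $\mathfrak{F}(\delta_{-h,r}^\alpha f)(\eta) = G_r(z)(i\eta)^\alpha \hat{f}(\eta)$ with $G_r(z) = z^{-\alpha}(1-e^{-z})^\alpha e^{rz}$. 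By linearity of the Fourier transform, the combined operator has symbol $\tilde{G}(z) := \lambda_1 G_p(z) + \lambda_2 G_q(z) = z^{-\alpha}(1-e^{-z})^\alpha\bigl(\lambda_1 e^{pz} + \lambda_2 e^{qz}\bigr)$, and---exactly as in the proof of the theorem---the convex combination approximates $\;_aD_x^\alpha f$ to order $s$ precisely when $\tilde{G}(z) = 1 + O(z^s)$. It therefore suffices to show $\tilde{G}(z) = 1 + O(z^2)$.

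Next I would expand the two factors of $\tilde{G}$ separately to first order. Using $1 - e^{-z} = z(1 - z/2 + \cdots)$ gives $z^{-\alpha}(1-e^{-z})^\alpha = (1 - z/2 + \cdots)^\alpha = 1 - \tfrac{\alpha}{2}z + O(z^2)$. For the second factor, $\lambda_1 e^{pz} + \lambda_2 e^{qz} = (\lambda_1 + \lambda_2) + (\lambda_1 p + \lambda_2 q)z + O(z^2)$. The hypothesis $\lambda_1 + \lambda_2 = 1$ fixes the constant term at $1$, so the only thing requiring verification is the coefficient of $z$: I would check that the prescribed weights satisfy the moment identity $\lambda_1 p + \lambda_2 q = \tfrac{\alpha}{2}$. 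Substituting $\lambda_1 = \frac{\alpha - 2q}{2(p-q)}$ and $\lambda_2 = \frac{2p - \alpha}{2(p-q)}$, the numerator $(\alpha - 2q)p + (2p - \alpha)q$ collapses to $\alpha(p-q)$, yielding exactly $\alpha/2$; note this holds for \emph{any} distinct $p,q$, so both pairs $(1,0)$ and $(1,-1)$ are covered at once.

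Finally I would multiply the two expansions, $\tilde{G}(z) = \bigl(1 - \tfrac{\alpha}{2}z + O(z^2)\bigr)\bigl(1 + \tfrac{\alpha}{2}z + O(z^2)\bigr) = 1 + O(z^2)$, the linear terms cancelling by design. By the equivalence recorded above, this establishes second order accuracy. The computation is entirely routine; the only genuine content---and the step I would treat as the crux---is confirming the moment identity $\lambda_1 p + \lambda_2 q = \alpha/2$, since that is precisely what forces the first-order error term to vanish. No step presents a real obstacle, so the proof is mainly a matter of organizing the expansion so that this cancellation is transparent.
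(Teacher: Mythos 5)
Your proposal is correct and follows essentially the same route as the paper: both form the combined symbol $\lambda_1 G_p(z)+\lambda_2 G_q(z)$ via linearity and verify $1+O(z^2)$ by Taylor expansion, with your version merely making the cancellation transparent through the moment identity $\lambda_1 p+\lambda_2 q=\alpha/2$ rather than expanding by brute force. The only thing the paper adds is the explicit $z^2$ coefficient together with the observation that it is non-zero for integer $p,q$, which shows the order is exactly two and not higher; your argument establishes $O(h^2)$ accuracy, which is all that Definition~\ref{def2} requires.
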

\begin{proof}
Since $\delta_{-h,p}$ and $\delta_{-h,q}$  have the same generating function $(1-z)^\alpha $ with shifts $p$ and $q$ respectively, The Taylor series of
$G(z) =  \frac{\lambda_1}{z^\alpha}e^{pz}  (1-e^{-z})^\alpha
+ \frac{\lambda_2}{z^\alpha}e^{qz}  (1-e^{-z})^\alpha $ which gives $G(z) = 1 + (- \frac{\alpha^{2}}{8} + \frac{\alpha p}{4} + \frac{\alpha q}{4} + \frac{\alpha}{24} - \frac{p q}{2})z^2+ O(z^3) $. The coefficient of $z^2$ is non-zero for all integer shifts $p$ and $q$.
\end{proof}

\begin{proposition}
The Lubich generating functions $W_p(z), p = 1,2,\cdots,6,$ in Table \ref{LubichHighOrder} are
of order $p$ accurate without shift. Moreover, if a non-zero shift $r$ is introduced  to the approximation, the order reduces to one.
\end{proposition}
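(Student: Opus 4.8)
The plan is to exploit the closed form of the Lubich generators recorded in Table \ref{LubichHighOrder}, namely $L_p(z) = \ell_p(z)^\alpha$ where $\ell_p(z) = \sum_{j=1}^p \frac{1}{j}(1-z)^j$, and to feed this straight into the criterion of Theorem \ref{Maintheorem}. For a shift $r$ the order of the associated approximation is controlled entirely by the behaviour of
\[
G_r(z) = \frac{1}{z^\alpha} L_p(e^{-z})\, e^{rz}
\]
near $z = 0$, so everything reduces to finding the low-order Taylor expansion of $L_p(e^{-z})$, and this in turn needs only the expansion of $\ell_p(e^{-z})$.

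The key observation I would use is that $\ell_p$ is the degree-$p$ truncation, in powers of $(1-z)$, of the logarithm series $-\ln z = \sum_{j=1}^\infty \frac{1}{j}(1-z)^j$, valid for $|1-z|<1$. Substituting $z = e^{-w}$ I would use $-\ln(e^{-w}) = w$ exactly, while $1 - e^{-w} = w + O(w^2) = O(w)$ makes the discarded tail $\sum_{j=p+1}^\infty \frac{1}{j}(1-e^{-w})^j$ begin at order $w^{p+1}$, with leading term $\frac{1}{p+1}w^{p+1}$. Hence
\[
\ell_p(e^{-w}) = w - \tfrac{1}{p+1}\,w^{p+1} + O(w^{p+2}) = w\bigl(1 - \tfrac{1}{p+1}w^{p} + O(w^{p+1})\bigr),
\]
so that raising to the power $\alpha$ gives $L_p(e^{-w}) = w^\alpha\bigl(1 - \tfrac{\alpha}{p+1}w^{p} + O(w^{p+1})\bigr)$.

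Both parts then follow by reading off $G_r$. In the unshifted case $r=0$ one gets $G_0(w) = 1 - \tfrac{\alpha}{p+1}w^{p} + O(w^{p+1}) = 1 + O(w^p)$, which by Theorem \ref{Maintheorem} is exactly order $p$ (and even identifies the leading error constant $a_p(0) = -\tfrac{\alpha}{p+1}$). For a nonzero shift I would write $G_r(w) = G_0(w)\,e^{rw} = \bigl(1 + O(w^p)\bigr)e^{rw}$ and expand. For the genuinely higher-order formulas $p \ge 2$ the correction term in $G_0$ sits at order $w^p$ and cannot affect the linear coefficient, so multiplying by $e^{rw} = 1 + rw + O(w^2)$ forces $G_r(w) = 1 + rw + O(w^2)$; since $r \neq 0$ this is order exactly one, again by Theorem \ref{Maintheorem}.

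I expect the only real content to be the identification of $\ell_p$ with the truncated logarithm, since that single fact pins the truncation error at $O(w^{p+1})$ uniformly in $p$ and spares us from expanding each polynomial in Table \ref{LubichHighOrder} individually; after that the conclusion is just a first-order expansion of $e^{rw}$. The one point worth flagging is the edge case $p=1$, where $L_1(z)=(1-z)^\alpha$ is already first order and $G_0(w) = 1 - \tfrac{\alpha}{2}w + \cdots$, so a shift yields linear coefficient $r - \tfrac{\alpha}{2}$; the order then stays one for every shift except the distinguished value $r=\alpha/2$, which restores order two and recovers the result behind (\ref{Nasir2nd}). The statement ``the order reduces to one'' is therefore to be read for the higher-order cases $p \ge 2$.
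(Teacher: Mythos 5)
Your proposal is correct and rests on the same criterion as the paper's proof, namely checking $G_r(z)=z^{-\alpha}W_p(e^{-z})e^{rz}=1+O(z^p)$ from Theorem \ref{Maintheorem}; the difference lies in how the central expansion is obtained and in the care taken over exactness of the order. The paper simply asserts that Taylor expansion gives $W_p(e^{-z})/z^{\alpha}=1+O(z^p)$, which in effect means expanding each of the six polynomials in Table \ref{LubichHighOrder} separately; you instead identify $\sum_{j=1}^{p}\frac{1}{j}(1-z)^{j}$ as the degree-$p$ truncation of $-\ln z$, from which $\ell_p(e^{-w})=w-\frac{1}{p+1}w^{p+1}+O(w^{p+2})$ follows uniformly in $p$ and also delivers the leading error constant $a_p(0)=-\alpha/(p+1)$. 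That is the cleaner route (it is the mechanism underlying Lubich's construction) and it scales beyond $p=6$. You are also more careful on the second assertion: the paper's step $(1+O(z))(1+O(z^{p}))=1+O(z)$ only shows the order is \emph{at least} one, whereas proving that it \emph{reduces to} one requires the linear coefficient --- which is $r$ for $p\ge 2$ --- to be nonzero, and you check this. Your flag on the $p=1$ case is a genuine observation the paper glosses over: there the linear coefficient is $r-\alpha/2$, so the non-integer shift $r=\alpha/2$ does not reduce the order but raises it to two, which is exactly the super-convergence result (\ref{Nasir2nd}); the proposition's second claim must therefore be read as applying to $p\ge 2$, or to shifts $r\ne\alpha/2$ when $p=1$.
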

\begin{proof}
When there is no shift ($r = 0$), Taylor series expansion gives $W_p(e^{-z})/z^\alpha = 1 + O(h^p)$.
When a shift $r \ne 0$  is introduced to $W_p(z)$, the order is determined by
\[
e^{rz}W_p(e^{-z})/z^\alpha = (1 + O(z))(1+O(z^p)) = 1+O(z), \hspace{3mm} for  \;\; 1\le p \le 6
\]
reducing the order to 1.
\end{proof}

\section{Construction of higher order approximations}\label{ConstructSec}

We construct generating functions for higher order approximation with shifts by the use of
Theorem \ref{Maintheorem} and its. The importance of Theorem \ref{Maintheorem} is that the construction process is
entirely confined to algebraic manipulation with the aid of Taylor series expansion.

Theorem \ref{Maintheorem} opens a door to choose a variety of forms for generating functions for
$W(z)$. In this paper, we choose the form
\begin{equation}\label{GenWmr}
 W(z) = (\beta_0 + \beta_1 z + \beta_2 z^2 +\cdots + \beta_p z^p )^\alpha
\end{equation}
to construct generators.

We have the following consistency
condition for
the coefficients $u_k $,  $k\ge 0$.

\begin{theorem}\label{Cor2}
If the generating function $W(z) $ in (\ref{GenWmr}) approximates the fractional derivative, then
\begin{equation*}\label{consistency}
 \beta_0 + \beta_1  + \beta_2 +\cdots + \beta_p = 0
\end{equation*}
\end{theorem}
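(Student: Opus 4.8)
The plan is to reduce the statement to the consistency consequence already packaged in Corollary~\ref{Cor1}, and then merely evaluate the special form (\ref{GenWmr}) at $z=1$. By hypothesis $W(z)$ of the product form (\ref{GenWmr}) approximates the fractional derivative, so by Definition~\ref{def2} its order is at least $p\ge 1$; in particular the approximation is consistent in the sense of Definition~\ref{def1}, and therefore Corollary~\ref{Cor1}, part~1, applies and gives $W(1)=0$. This single identity is the whole engine of the proof.

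If I wanted the argument self-contained rather than leaning on the corollary, I would re-derive $W(1)=0$ directly from Theorem~\ref{Maintheorem}: order $p\ge 1$ forces $G_r(z)=\tfrac{1}{z^\alpha}W(e^{-z})e^{rz}=1+O(z^p)$, equivalently $W(e^{-z})e^{rz}=z^\alpha(1+O(z^p))$. Letting $z\to 0$, the right-hand side tends to $0$ while $e^{-z}\to 1$ and $e^{rz}\to 1$, so $W(1)=0$.

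Next I would substitute $z=1$ into the explicit generator (\ref{GenWmr}). Since the bracketed polynomial at $z=1$ is exactly $\beta_0+\beta_1+\cdots+\beta_p$, we obtain
\[
 W(1) = \left(\beta_0+\beta_1+\cdots+\beta_p\right)^\alpha = 0 .
\]
Finally, because $n-1<\alpha\le n$ with $n\ge 1$ guarantees $\alpha>0$, the map $x\mapsto x^\alpha$ vanishes only at $x=0$; hence $\beta_0+\beta_1+\cdots+\beta_p=0$, which is the asserted consistency relation.

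The only place needing a moment of care — and the closest thing to an obstacle in an otherwise one-line specialization — is this last implication: one must invoke $\alpha>0$ to pass from $(\sum_{j}\beta_j)^\alpha=0$ to $\sum_{j}\beta_j=0$, since for a general non-integer exponent the power is interpreted via $\exp(\alpha\log(\cdot))$ and is nonzero whenever its base is nonzero. Everything else is a direct reading-off from the already-established consistency fact $W(1)=0$.
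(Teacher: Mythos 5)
Your proof is correct and follows essentially the same route as the paper, which simply cites Corollary~\ref{Cor1}(1) to get $W(1)=0$ and reads off $\beta_0+\cdots+\beta_p=0$ from the form (\ref{GenWmr}). Your added care in noting that $\alpha>0$ is needed to pass from $(\sum_j\beta_j)^\alpha=0$ to $\sum_j\beta_j=0$ is a detail the paper leaves implicit, but it does not change the approach.
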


\begin{proof}
Using Corollary \ref{Cor1}(1).
\end{proof}

Generating function of the approximation of order $p$ with shift $r$ can be
algebraically constructed by the following algorithm.

\begin{enumerate}
  \item Set $ W(z) = (\beta_0 + \beta_1 z + \beta_2 z^2 +\cdots + \beta_p z^p )^\alpha $, where the coefficients $\beta_i$ are to be determined.

  \item Expand
  $G(z) = \frac{W(e^{-z})e^{rz}}{z^\alpha} $ in Taylor series:\\
   \begin{equation}\label{GTaylor}
   G(z) = a_0 + a_1 z + \cdots +a_l z^l +\cdots ,
   \end{equation}
   where $a_l \equiv a_l(\beta, r,\alpha)$.
  \item Form the system of equations by imposing order conditions\\
  $a_0 = 1, a_l = 0  $ for $l = 1, 2,3,\cdots p-1$.
  \item Solve the systems for $\beta_i, \quad i = 0, 1,\cdots,  p$ with the
    additional consistency condition $\beta_0 +\beta_1 +\cdots+\beta_p = 0$.
\end{enumerate}

We have obtained approximating generating functions $W_{p,r}(z)$ with order $p$, shift $r$ for $1\le p\le 6$
listed in Table \ref{Wtable}.

\begin{table}[h]
\centering
\begin{tabular}{ll}
\hline\hline
$p$ & $\beta_k, 0\le k \le p$\\
\hline\hline
1 & $ \beta_0 = 1$ \\
& $ \beta_1 = -1$\\
\hline
2 & $ \beta_0   = \frac{3}{2} - \frac{r}{\alpha}  $ \\
& $ \beta_1   = -2 + \frac{2 r}{\alpha} $ \\
& $ \beta_2   =\frac{1}{2} - \frac{r}{\alpha} $\\
\hline
3 & $ \beta_0    = \frac{11}{6} - \frac{2 r}{\alpha} + \frac{r^{2}}{2 \alpha^{2}} $ \\
& $ \beta_1    =  -3 + \frac{5 r}{\alpha} - \frac{3 r^{2}}{2 \alpha^{2}} $  \\
& $ \beta_2    =  \frac{3}{2} - \frac{4 r}{\alpha} + \frac{3 r^{2}}{2 \alpha^{2}} $\\
& $ \beta_3    =  - \frac{1}{3} + \frac{r}{\alpha} - \frac{r^{2}}{2 \alpha^{2}} $ \\
\hline
4 &$ \beta_0    = \frac{25}{12} - \frac{35 r}{12 \alpha} + \frac{5 r^{2}}{4 \alpha^{2}} - \frac{r^{3}}{6 \alpha^{3}} $\\
& $ \beta_1    =  -4 + \frac{26 r}{3 \alpha} - \frac{9 r^{2}}{2 \alpha^{2}} + \frac{2 r^{3}}{3 \alpha^{3}} $\\
& $ \beta_2    =  3 - \frac{19 r}{2 \alpha} + \frac{6 r^{2}}{\alpha^{2}} - \frac{r^{3}}{\alpha^{3}} $ \\
& $ \beta_3    =  - \frac{4}{3} + \frac{14 r}{3 \alpha} - \frac{7 r^{2}}{2 \alpha^{2}} + \frac{2 r^{3}}{3 \alpha^{3}} $\\
& $ \beta_4    =  \frac{1}{4} - \frac{11 r}{12 \alpha} + \frac{3 r^{2}}{4 \alpha^{2}} - \frac{r^{3}}{6 \alpha^{3}} $ \\
\hline
5 & $ \beta_0    = \frac{137}{60} - \frac{15 r}{4 \alpha} + \frac{17 r^{2}}{8 \alpha^{2}} - \frac{r^{3}}{2 \alpha^{3}} + \frac{r^{4}}{24 \alpha^{4}} $ \\
& $ \beta_1    =  -5 + \frac{77 r}{6 \alpha} - \frac{71 r^{2}}{8 \alpha^{2}} + \frac{7 r^{3}}{3 \alpha^{3}} - \frac{5 r^{4}}{24 \alpha^{4}} $\\
& $ \beta_2    =  5 - \frac{107 r}{6 \alpha} + \frac{59 r^{2}}{4 \alpha^{2}} - \frac{13 r^{3}}{3 \alpha^{3}} + \frac{5 r^{4}}{12 \alpha^{4}} $\\
& $ \beta_3    =  - \frac{10}{3} + \frac{13 r}{\alpha} - \frac{49 r^{2}}{4 \alpha^{2}} + \frac{4 r^{3}}{\alpha^{3}} - \frac{5 r^{4}}{12 \alpha^{4}} $\\
& $ \beta_4    =  \frac{5}{4} - \frac{61 r}{12 \alpha} + \frac{41 r^{2}}{8 \alpha^{2}} - \frac{11 r^{3}}{6 \alpha^{3}} + \frac{5 r^{4}}{24 \alpha^{4}} $ \\
& $ \beta_5    =  - \frac{1}{5} + \frac{5 r}{6 \alpha} - \frac{7 r^{2}}{8 \alpha^{2}} + \frac{r^{3}}{3 \alpha^{3}} - \frac{r^{4}}{24 \alpha^{4}} $ \\
\hline
6 & $ \beta_0    = \frac{49}{20} - \frac{203 r}{45 \alpha} + \frac{49 r^{2}}{16 \alpha^{2}} - \frac{35 r^{3}}{36 \alpha^{3}} + \frac{7 r^{4}}{48 \alpha^{4}} - \frac{r^{5}}{120 \alpha^{5}} $ \\
& $ \beta_1    =  -6 + \frac{87 r}{5 \alpha} - \frac{29 r^{2}}{2 \alpha^{2}} + \frac{31 r^{3}}{6 \alpha^{3}} - \frac{5 r^{4}}{6 \alpha^{4}} + \frac{r^{5}}{20 \alpha^{5}} $ \\
& $ \beta_2    =  \frac{15}{2} - \frac{117 r}{4 \alpha} + \frac{461 r^{2}}{16 \alpha^{2}} - \frac{137 r^{3}}{12 \alpha^{3}} + \frac{95 r^{4}}{48 \alpha^{4}} - \frac{r^{5}}{8 \alpha^{5}} $\\
& $ \beta_3    =  - \frac{20}{3} + \frac{254 r}{9 \alpha} - \frac{31 r^{2}}{\alpha^{2}} + \frac{121 r^{3}}{9 \alpha^{3}} - \frac{5 r^{4}}{2 \alpha^{4}} + \frac{r^{5}}{6 \alpha^{5}} $\\
& $ \beta_4    =  \frac{15}{4} - \frac{33 r}{2 \alpha} + \frac{307 r^{2}}{16 \alpha^{2}} - \frac{107 r^{3}}{12 \alpha^{3}} + \frac{85 r^{4}}{48 \alpha^{4}} - \frac{r^{5}}{8 \alpha^{5}} $ \\
& $ \beta_5    =  - \frac{6}{5} + \frac{27 r}{5 \alpha} - \frac{13 r^{2}}{2 \alpha^{2}} + \frac{19 r^{3}}{6 \alpha^{3}} - \frac{2 r^{4}}{3 \alpha^{4}} + \frac{r^{5}}{20 \alpha^{5}} $\\
& $ \beta_6    =  \frac{1}{6} - \frac{137 r}{180 \alpha} + \frac{15 r^{2}}{16 \alpha^{2}} - \frac{17 r^{3}}{36 \alpha^{3}} + \frac{5 r^{4}}{48 \alpha^{4}} - \frac{r^{5}}{120 \alpha^{5}} $ \\
\hline
\end{tabular}
\caption{Coefficients $\beta_k$ of
$W_{p,r}(z) = (\beta_0 + \beta_1 z + \cdots + \beta_p z^p  )^\alpha $} for order $p$, shift $r$ approximation, $1\le p\le 6$.\label{Wtable}
\end{table}

Note that when there is no shift, i.e., $r = 0$, these generating functions
reduces to those in Table \ref{LubichHighOrder} obtained by Lubich \cite{lubich1986discretized}.

The Gr\"{u}nwald weights $w_{k,r}^{(\alpha)} $ can be computed by a recurrence formula
\cite{rall1981automatic, weilbeer2005efficient}.

It seems that, despite
 higher order of approximation of $W_{p,r}(z)$, the stability of solutions
using these approximations to FDEs remains an issue. Our numerical tests
on some steady state problems show that the second order approximation
$W_{2,1}(z) $ displays stability for values of $\alpha $ in the interval $[1,2]$.
However, for the other  higher order approximations,
the stability is limited to a subset $[\alpha_0, 2], 1<\alpha_0, $   of the
interval $[1,2]$. This phenomenon warrants an investigation
for approximations of orders three and above.

From now on, we focus our attention on the second order approximation $W_{2,r}(z) $.
We denote the approximation operators corresponding to the generators $W_{p,r}(z)$  by $\Delta_{p,\pm r}$ for left and right fractional differential operators.

We have the following for $W_{2,r}(z) $ from Theorem \ref{Maintheorem}.

The second order approximations of generator $W_{2,r}(z) $ is given by
\begin{equation}\label{Order2}
  \Delta_{2,+r}^\alpha u(x) = \;_{-\infty} D_x^\alpha u(x) + O(h^2),
\end{equation}

\begin{equation}\label{Order2R}
  \Delta_{2,-r}^\alpha u(x) = \;_x D_\infty^\alpha u(x) + O(h^2),
\end{equation}

Another approach to construct higher order approximation generators $W(z)$ is to retain the non-zero coefficient $a_2(r)$
in (\ref{GTaylor}) of step 2 of the algorithm without imposing the vanishing condition. That is, one imposes conditions $a_0(r) = 1, a_l(r) = 0$ for $ l = 1, 3,4, \cdots, p$ for order $p$. This technique is similar to the method used in 
\cite{zhou2013quasi}.

Then we have
\[
G_r(z) = 1 + a_2(r) z^2 + O(z^{p}).
\]
Equation (\ref{MainApprox})  then gives
\begin{align*}\label{QCAm}
\Delta_{p,+r}^\alpha  u(x) & = \;_{-\infty} D_x^\alpha u(x) + a_2(r) h^2 \;_{-\infty} D_x^{2+\alpha} u(x) + O(h^{p})\nonumber\\
& = \;_{-\infty} D_x^\alpha u(x) + a_2(r) h^2 D^2 \;_{-\infty} D_x^{\alpha} u(x) + O(h^{p})\nonumber\\
& = P_x \;_{-\infty} D_x^\alpha u(x) + O(h^{p}),
\end{align*}
where
\begin{equation*}\label{Px}
P_x = \left(I + h^2 a_2(r) D^2 \right),
\end{equation*}
with $D^2 = d^2/dx^2 $  and the identity operator $I$.

The differential operator  $D^2$ is approximated by the second order central difference operator $\delta_h^2 $ given by
\[
\delta_h^2 u(x) = \frac{1}{h}( u(x-h) - 2u(x) + u(x+h) )
\]
with
\[
    \delta_h^2 u(x) = D^2u(x) + O(h^2).
\]
From this, an approximation for the operator $P_x$ is obtained as
\begin{equation}\label{PhOperator}
    P_h u(x) = ( 1 + a_2(r) h^2 \delta_h^2 )u(x)
\end{equation}
with order 4 accuracy,
\[
    P_h u(x) = P_xu(x) + O(h^4).
\]

Hao et al. \cite{hao2015fourth} derived a fourth order approximation from
the first order Gr\"{u}nwald approximation (\ref{LshiftedGApp})  by considering a convex combination of three of its shifted forms with an appropriate $P_x$ and called it a quasi-compact approximation. YanYan Yu et al. \cite{yu2017third} used this technique to derive third order schemes
for tempered fractional diffusion equations. In those papers, the approximation schemes were derived directly from the first order Gr\"{u}nwald approximation $(1-z)^\alpha$.

We use the properties of general generating functions of approximations to derive higher order schemes as it allows various choices with only algebraic manipulations.

We obtained a third order quasi-compact approximation  from the second order approximation generator $W_{2,r}(z)$ without additional vanishing conditions as
\begin{equation}\label{QCA3}
  \Delta_{2,\pm r}^\alpha u(x) = P_x \;_{-\infty} D_x^\alpha u(x)  + O(h^3),
\end{equation}
with
\begin{equation*}\label{CompactCoeff3}
     a_2(r) = - \frac{\alpha}{3} + r - \frac{r^{2}}{2 \alpha}.
\end{equation*}

\subsection{ Numerical tests }\label{NumericalTestSec}

We test the approximation operators of orders 2 and 3 derived in this section applying them to the steady state problem
\begin{align}\label{steadystate}
  \;_aD_x^\alpha u(x) & = f(x),  \quad a\le x \le b, \\
  u(a) &= \phi_0,  \quad u(b) = \phi_1. \nonumber
\end{align}
We briefly describe the approximation schemes for the steady state problem.\\

For a uniform partition $ a=x_0 < x_1 < x_2 < \cdots < x_N = b$ of the domain $[a,b]$ with subinterval size $h$, problem (\ref{steadystate}) at $x_i$ is approximated by the operator $\Delta_{2,+1}$. Then one gets from (\ref{Order2}),
\[
\Delta_{2,+1} u_i = f_i + O(h^2), \quad i = 0,1,2,\cdots N,
\]
where  $u_i = u(x_i)$ and $f_i = f(x_i), i = 0,1,2,\cdots,N-1$.
Neglecting the $O(h^2)$ remainder term, we get the approximation scheme
\begin{equation}\label{steadyApprox2}
\Delta_{h,+1} {\hat u}_i = f_i , \quad i = 0, 1,2,\cdots N,
\end{equation}
where ${\hat u}_i $ the approximation of the exact solution $u_i$ for $i = 0, 1,2,\cdots, N-1$ with $\hat{u}_0 = \phi_0,   \hat{u}_N = \phi_1.$

Let ${ U} = [u_0, {\hat u}_1,{\hat u}_2,\cdots,{\hat u}_{N-1},u_N]^T$,
$ F = [ f_0, f_1,f_2,\cdots, f_N]^T$ with the
boundary conditions incorporated in ${ U}$. Then, the matrix formulation of (\ref{steadyApprox2}) is given by
$
A_{2,1} U = F,
$
where $A_{2,1}$ is an $(N+1) \times (N+1)$ toeplitz matrix given by
\begin{equation*}\label{AalphaMatrix}
A_{2,1} (i,j) = \left\{
\begin{array}{cc}
  w_{i-j + 1,1}^{(\alpha)}, & i \ge j-1 \\
  0, & elsewhere.
\end{array}
\right.
\end{equation*}

Bringing the boundary values to the right side, the reduced system becomes,
\begin{equation}\label{Reduced2}
{\hat A}_{2,1} {\hat U} = {\hat F} - A_0\phi_0 - A_N \phi_1    ,
\end{equation}
where ${\hat A}_{2,1}$ is the reduced matrix obtained from $A_{2,1} $ by deleting the first and last rows and columns, ${\hat U}, {\hat F} $ are obtained from $U,F$ respectively by deleting their first and last boundary entries. $A_0,A_N$
are the first and last column vectors of the matrix $A_{2,1}$ reduced at both ends as above.
The approximate solution is then obtained by solving the matrix equation (\ref{Reduced2}).

For the right fractional derivative $\;_x D_\infty u(x) $, the approximation
matrix is given by $A_{2,1}^T $.

We derive a third order quasi-compact approximation as follows.
We  pre-multiply  (\ref{steadystate}) by the  operator $P_x$:
\begin{equation*}\label{Conditioned}
   P_x\;_aD_x^\alpha u(x)  = P_x f(x),  \quad a\le x \le b.
\end{equation*}

From (\ref{QCA3}), the operator $P_x \;_aD_x^\alpha$ is approximated by $\Delta_{2,+1}$ with order 3. At the grid point  $x_i$, we get
\[
\Delta_{2,+1} u_i = P_h f_i + O(h^3), \quad i = 0,1,2,\cdots N,
\]
where $P_h$ is given in (\ref{PhOperator}).

It is expressed in matrix form as
$
A_{2,1} U = P F,
$
where $P$ is the matrix corresponding to $P_h$ and given by the tri-diagonal matrix
\begin{equation}\label{Tridiagonal}
  P = Tri[ a_2(1), 1 - 2h^2 a_2(1) ,a_2(1) ].
\end{equation}

After  imposing boundary conditions, we obtain the ready-to-solve third order scheme as
\begin{equation}\label{QCapprox}
{\hat A}_{2,1} {\hat U} = {\hat P} {\hat F} - A_0\phi_0 - A_N \phi_1   ,
\end{equation}
where ${\hat P} $ is the reduced matrix of $P$.

Note that we have the same approximation operator $\Delta_{2,+1}$ for
both approximations of orders 2 and 3 and hence have the same matrix $A_{2,1}$.
The pre-multiplication of $P_x$ improves the order of accuracy from 2 to 3. We prefer to call the operator $P_x$ a preconditioner
to the fractional differential operator.

We test the approximation scheme devised in this subsection using the steady state test problem
\begin{align*}\label{SteadytestExample}
  \;_0 D_x^\alpha u(x) & = \frac{10\Gamma{(n+1)}}
  {\Gamma{(n+1-\alpha)} } x^{n-\alpha}
  , \quad 0\le x\le 1, \\
  u(0) & = 0 , \quad
  u(1)  = 10 \nonumber
\end{align*}
with the exact solution  $ u (x) = 10 x^n $. We set $ n = 8 $ and test for various values
of the parameter $\alpha$.

First, we test the second order approximation (\ref{Reduced2}) with $\alpha = 1.1, 1.5$ and $1.9$.
The number of grid subintervals $N$ corresponding to the discretization size
$h=(1-0)/N $ was considered for values $ N=16,32,\cdots,1024$.
The maximum error
$\|u - U\|_\infty $ and the computed convergence orders
 are listed in Table \ref{Order2SteadyTable}.\\

\begin{table}[h]
  \centering\footnotesize
  \begin{tabular}{rllllll}
  \hline
  &  $\alpha = 1.1 $ & &  $\alpha = 1.5 $ & &  $\alpha = 1.9 $\\
  \hline
    $N$ & $\|u - U\|_\infty $ & Order & $\|u - U\|_\infty $ & Order
    & $\|u - U\|_\infty $ & Order\\
  \hline
  16  & 4.8893e-01  &  ----  & 2.5141e-01  &  ---- & 1.3365e-01  &  ----  \\
  32  & 1.1592e-01  &  2.08  & 6.4851e-02  &  1.95 & 3.3951e-02  &  1.98  \\
  64  & 2.7227e-02  &  2.09  & 1.6450e-02  &  1.98 & 8.5491e-03  &  1.99 \\
 128  & 6.3685e-03  &  2.10  & 4.1396e-03  &  1.99 & 2.1446e-03  &  2.00  \\
 256  & 1.4873e-03  &  2.10  & 1.0383e-03  &  2.00 & 5.3703e-04  &  2.00  \\
 512  & 3.5020e-04  &  2.09  & 2.5997e-04  &  2.00 & 1.3437e-04  &  2.00 \\
1024  & 8.7574e-05  &  2.00  & 6.5044e-05  &  2.00 & 3.3606e-05  &  2.00 \\
\hline
  \end{tabular}
  \caption{Second order Approximation with shift $r = 1$ using $W_{2,r}$}\label{Order2SteadyTable}
\end{table}

Next, we test the order 3 quasi-compact approximation (\ref{QCapprox}) with the same setting of parameters used for the previous testing. The test results are given in Table \ref{QCA3SteadyTable}.
\begin{table}[h]
  \centering\footnotesize
  \begin{tabular}{rllllll}
  \hline
  &  $\alpha = 1.1 $ & &  $\alpha = 1.5 $ & &  $\alpha = 1.9 $\\
  \hline
    $N$ & $\|u - U\|_\infty $ & Order & $\|u - U\|_\infty $ & Order
    & $\|u - U\|_\infty $ & Order\\
  \hline
  16  & 9.8696e-03  &  ----  & 1.3027e-02  &  ---- & 3.8208e-03  &  ---- \\
  32  & 1.0719e-03  &  3.15  & 1.6435e-03  &  3.00 & 4.6147e-04  &  3.03\\
  64  & 1.2038e-04  &  3.13  & 2.0611e-04  &  3.00 & 5.6560e-05  &  3.01 \\
 128  & 1.3765e-05  &  3.11  & 2.5805e-05  &  3.00 & 7.0003e-06  &  3.01 \\
 256  & 1.5891e-06  &  3.11  & 3.2281e-06  &  3.00 & 8.7069e-07  &  3.00 \\
 512  & 1.8439e-07  &  3.01  & 4.0366e-07  &  3.00 & 1.0857e-07  &  3.00 \\
1024  & 2.2872e-08  &  3.00  & 5.0467e-08  &  3.00 & 1.3563e-08  &  2.96 \\
\hline
  \end{tabular}
  \caption{Third order Approximation with shift $r = 1$ using $W_{2,1}$}\label{QCA3SteadyTable}
\end{table}
These tests confirm the theoretical justifications of the
orders of the two schemes.

\section{Approximation of fractional diffusion equation}\label{ApproxSec}
We apply the approximations constructed in the previous section to the numerical approximation of the space fractional diffusion
equation defined in the domain $[a,b]\times[0,T]$:
\begin{equation}\label{diffusioneq}
  \frac{\partial u(x,t)}{\partial t} =
  K_1 \;_aD_{x}^{\alpha}u(x,t) +K_2 \;_xD_{b}^{\alpha}u(x,t) + f(x,t),
\end{equation}
with the initial and boundary conditions
\begin{align*}
u(x,0) &= s_0(x) , & x\in[a,b]\\
u(a,t) &= \phi_1(t), u(b,t) = \phi_2(t), &t\in[0,T],\label{initboun}
\end{align*}
where $u(x.t)$ is the unknown function to be determined; $K_1, K_2$ are non-negative constant
diffusion coefficients with $K_1 + K_2 \ne 0$, i.e, not both are simultaneously zero, and
$f(x,t)$ is a known source term.
The boundary conditions are set as follows:
If $K_1 \ne 0$, then $\phi_1(t)\equiv 0$ and
if $K_2 \ne 0$, then $\phi_2(t)\equiv 0$.
We assume that the diffusion problem has a unique solution.

The space domain $[a,b]$ is partitioned into a uniform mesh of size $N$
with subintervals of length $h = (b-a)/N$, and the time domain $[0,T]$ into a uniform partition of size $M$
with subintervals of length $\tau = T/M$. These two partitions form a uniform partition of
the 2-D domain $[a,b]\times[0,T]$ with grid points $(x_i,t_m)$,
where $x_i = a + ih$  and $ t_m = m\tau,  \quad 0\le i \le N ,\quad  0\le m\le M$.
We use the following notations for conciseness: $u_i^m = u(x_i,t_m)$ ,
$t_{m+1/2} = \frac{1}{2}(t_{m+1}+t_m)$ and $ f_i^{m+1/2} = f(x_i, t_{m+1/2})$.

We present the CN type scheme with the third order approximation in (\ref{QCA3}) for the space fractional derivative using $W_{2,1}(z)$ with the preconditioner operator
$P_x = 1 + h^2 a_2(1) D^2 $.  For the second order approximation in (\ref{Order2}), $P_x$ will be the unit operator $I$.

Preconditioning (\ref{diffusioneq}) by $P_x$, one gets the
equivalent equation
\begin{equation*}\label{ModiDiffusion}
P_x \delta_t  u(x,t) =
{\bf D} u(x,t)
+ P_x f(x,t),
\end{equation*}
where ${\bf D} = K_1 P_x \;_aD_{x}^{\alpha} + K_2 P_x  \;_xD_{b}^{\alpha} $.\\

Using the approximations $\Delta_2 = K_1 \Delta_{2,+1}^\alpha  + K_2 \Delta_{2,-1}^\alpha$ of order 3 for $D$,   $P_h$ of order 4 for $ P_x$ and the second order approximations\\
$
   \frac{\partial u(x,t)}{\partial t} = \frac{u(x,t+\tau)-u(x,t)}{\tau} +O(\tau^2) $ and
$ u(x,t+\tau/2) = \frac{u(x,t+\tau)+u(x,t)}{2}+O(\tau^2)$ ,
the Crank-Nicolson type scheme at
$(x_i, t_m), 0\le i \le N-1, 0\le m \le M$, is given by
\begin{equation}\label{ApproxDiffusion1}
P_h \frac{u_i^{m+1}-u_i^m}{\tau} =
\Delta_{2}\frac{1}{2}(u_i^{m+1}+u_i^m)
+ P_h f_i^{m+1/2} +O(\tau^2 +h^p), \quad p = 2,3.
\end{equation}
Here, $P_h = I $ for $p = 2$.

Let ${U}^m $ be the solution of (\ref{ApproxDiffusion1})
after neglecting the $O(\tau^2 + h^p)$ terms with
${U}^m = [\hat{u}_0^m, {\hat u}_1^m, {\hat u}_2^m,\cdots,{\hat u}_{N-1}^m, \hat{u}_N^m]^T $,where  ${\hat u}_i^n $
becomes the approximation of the exact values $u_i^n$.
Then, (\ref{ApproxDiffusion1}) becomes

\begin{equation*}\label{CNtypeApprox}
P_h ({{\hat u}_i^{m+1}-{\hat u}_i^m}) =
\frac{\tau}{2}\Delta_{2}({\hat u}_i^{m+1}+{\hat u}_i^m)
+ \tau P_h f_i^{m+1/2}, 0\le i \le N, 0 \le m \le M-1.
\end{equation*}

Thus, the Crank-Nicolson type scheme in matrix form reads
\begin{equation}\label{CN1}
 P (U^{m+1} - U^m) =
B (U^{m+1}+U^m) + \tau P F^{m+1/2}, 0\le m\le M-1 ,
\end{equation}
   where $ F^{m+1/2} = \tau [f_0^{m+1/2},f_2^{m+1/2},\cdots, f_{N}^{m+1/2}]^T $,
$P $ is given in (\ref{Tridiagonal}).
The matrix $B $ is corresponding to the operator $\Delta_{2}$ given by
$
B = \frac{\tau}{2} (
  K_1 A_{2,1}  +
K_2 A_{2,1}^T).
$
Re-arranging for $U^{m+1} $ and $U^m$,
we have
\begin{equation}\label{CNsystem}
  (P-B) U^{m+1} = (P+B)U^{m} + \tau P F^{m+1/2}, \;  m = 0,1,2,\cdots, M-1.
\end{equation}

 Let ${\hat P} $ and
$ {\hat B} $ be the reduced matrix from $P $ and  $B$ respectively, and
${\hat F}^{m+1/2} $ be the reduced vector from $F^{m+1/2}$ as was in Section \ref{NumericalTestSec}.

After imposing the boundary conditions, equation
(\ref{CNsystem}) reduces to the ready-to-solve form
\begin{equation*}\label{CNsystem1}
  ({\hat P}-{\hat B}){\hat U}^{m+1} = ({\hat P}+{\hat B}){\hat U}^{m} + \tau {\hat P}{\hat F}^{m+1/2} + {\bf \hat b}^m, \;  m = 0,1,2,\cdots, M-1,
\end{equation*}
where
$ {\bf \hat b}^m = {B}_0 (u^{m+1}_0 + u^{m}_0) + {B}_N (u^{m+1}_N + u^{m}_N)    $ and
$ {B}_0 , {B}_N $ are the first($0^{th}$) and last($N^{th}$)
column vectors of the matrix $B$ reduced again as before.

\section{Stability and convergence}\label{StabilitySec}

We establish the stability of the Crank-Nicolson scheme (\ref{CNsystem}) for the approximations of orders 2 and 3  given in (\ref{Order2}) and (\ref{QCA3}).

We closely follow the analysis in \cite{hao2015fourth} and
present some required results.

Let $V_h = \{ v | v= (v_0, v_1 , \cdots, v_N), v_i \in \mathbb{R} , v_0 = v_N = 0 \} $ be the space of grid functions in the computational domain  in space interval $[a, b]$.

For any $u,v \in V_h$, define the discrete inner products and corresponding norms as
\begin{align*}\label{innerproducts}
  (u,v) & = h \sum_{i=1}^{N-1} u_i v_i, & \langle \delta_h u, \delta_h v\rangle  &= h \sum_{i=1}^{N-1} (\delta_h u_{i-1/2})(\delta_h v_{i-1/2}),\\
  \|u\| &= \sqrt{(u,u)},   &  |u|_1 &= \sqrt{\langle \delta_h u, \delta_h u\rangle}.
\end{align*}

Then, we have the following:
\begin{lemma}\leavevmode\label{lem2}
\begin{enumerate}
  \item The operator $P_h$ is self-adjoint.
  \item The quadratic form $(P_h u, u)$ satisfies
  $\frac{1}{5}\|u\| < (P_h u,u) \le \|u\| $ and hence
the norm $\|u\|_P = (P_h u,u) $ is equivalent to $\|u\|$.
\end{enumerate}
\end{lemma}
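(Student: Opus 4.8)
The plan is to reduce the whole statement to the explicit constant $c := a_2(1) = 1 - \frac{\alpha}{3} - \frac{1}{2\alpha}$ (the coefficient $a_2(r)$ appearing in (\ref{QCA3}) evaluated at the shift $r=1$) together with one discrete summation-by-parts identity. By (\ref{PhOperator}), acting on a grid function $u\in V_h$ the operator reads $(P_h u)_i = u_i + c\,(u_{i-1}-2u_i+u_{i+1})$; that is, $P_h$ is the symmetric tridiagonal operator $\mathrm{Tri}[c,\,1-2c,\,c]$ of (\ref{Tridiagonal}) acting under the homogeneous constraint $u_0=u_N=0$.

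For part (1) I would note that self-adjointness is forced by this symmetric form. Writing $(P_h u,v)=h\sum_{i=1}^{N-1}(P_h u)_i v_i$ and peeling off the diagonal contribution $h\sum_i u_i v_i$ (symmetric in $u,v$), it remains to transfer the second difference. A single summation by parts, using $u_0=u_N=v_0=v_N=0$ to annihilate the boundary terms, gives $\sum_i(u_{i-1}-2u_i+u_{i+1})v_i=\sum_i u_i(v_{i-1}-2v_i+v_{i+1})$, whence $(P_h u,v)=(u,P_h v)$.

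For part (2) the engine is the quadratic form. Setting $v=u$ in the same summation by parts turns the second-difference term into minus a sum of squared forward differences, producing the identity
\[
(P_h u,u)=\|u\|^2 - c\,h\sum_{j=1}^{N}(u_j-u_{j-1})^2 = \|u\|^2 - c\,h^2|u|_1^2 .
\]
The upper bound $(P_h u,u)\le\|u\|^2$ is then immediate once $c>0$, and the lower bound rests on the elementary discrete inverse inequality $h^2|u|_1^2\le 4\|u\|^2$, obtained from $(u_j-u_{j-1})^2\le 2u_j^2+2u_{j-1}^2$ and the boundary conditions $u_0=u_N=0$; this yields $(P_h u,u)\ge(1-4c)\|u\|^2$.

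The only place the constant $\tfrac15$ enters is the uniform estimate $0<c<\tfrac15$ for $\alpha\in(1,2)$, which I would extract directly from $c=1-\big(\tfrac{\alpha}{3}+\tfrac{1}{2\alpha}\big)$. The function $g(\alpha)=\tfrac{\alpha}{3}+\tfrac{1}{2\alpha}$ is convex, so on $[1,2]$ it is maximised at an endpoint, $g(1)=\tfrac56,\ g(2)=\tfrac{11}{12}<1$, giving $c\ge\tfrac1{12}>0$; and AM--GM gives $g(\alpha)\ge 2\sqrt{\tfrac16}=\sqrt{2/3}>\tfrac45$, so $c<\tfrac15$. Hence $1-4c>\tfrac15$ and $\tfrac15\|u\|^2<(P_h u,u)\le\|u\|^2$, which after taking square roots shows that $\sqrt{(P_h u,u)}$ is equivalent to $\|u\|$ --- the claimed norm equivalence. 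I expect the one genuinely delicate step to be the index bookkeeping in the summation by parts (the shift $i\mapsto i+1$ and the verification that every boundary term vanishes); the bound on $c$ is then a two-line convexity/AM--GM argument.
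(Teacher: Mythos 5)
Your proof is correct and takes essentially the same route as the paper: the same summation-by-parts identity for self-adjointness, the same decomposition \((P_h u,u)=\|u\|^2-a_2(1)\,h^2|u|_1^2\) combined with the discrete inverse inequality \(h^2|u|_1^2\le 4\|u\|^2\), and the same key numerical bounds \(\tfrac{1}{12}\le a_2(1)\le 1-\tfrac{\sqrt{6}}{3}<\tfrac{1}{5}\). The only differences are cosmetic: you obtain the inverse inequality directly from \((a-b)^2\le 2a^2+2b^2\) rather than via Cauchy--Schwarz on \(-(\delta_h^2u,u)\), and you supply the convexity/AM--GM justification of the bounds on \(a_2(1)\) that the paper merely asserts.
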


\begin{proof}
1. $ (P_h u,v) = ( (1+h^2 a_2(1) \delta_h^2)u,v)
= (u,v) + h^2 a_2(1) (\delta_h^2 u,v)  =
(u,v) - h^2 a_2(1) (\delta_h u,\delta_h v)
 =(u,v) + h^2 a_2(1) (u,\delta_h^2 v) = (u,P_h v)$. \\

2. For $1 \le \alpha \le 2 $, we easily see that
$ \frac{1}{12} \le a_2(1) \le 1 - \frac{\sqrt{6}}{3} < \frac{1}{5}$.
Also,\\ $|u|_1^2 = (\delta_h u,\delta_h u)=-(\delta_h^2 u,u) $ and \\
 $  |u|_1^2 \le \| \delta_h^2 u\|  \|u\|\le
\|\frac{1}{h^2}\sum_{i=1}^{N-1} (u_{i+1}-2u_i+u_{i-1} )\| \|u\|
\le \frac{4}{h^2} \|u\|^2. $
\\
Now,
$ (P_h u,u) = (u,u) + h^2 a_2(1) (\delta_h^2 u,u) = \|u\|^2 -h^2 a_2(1) |u|_1^2 \le \|u\|^2$.\\
Also,
$ (P_h u,u) =  \|u\|^2 - h^2 a_2(1) |u|_1^2 > \|u\|^2 -\frac{4}{5}  \|u\|^2 = \frac{1}{5}\|u\|^2 $.
Thus, the equivalence of the norm follows.
\end{proof}

\begin{lemma}\label{ToeplitzNegative}
Let $\{t_{\pm k}\}_{k=0}^\infty $ be a double sided real sequence such that
(i) $t_k + t_{-k} \ge 0$ for $k\ne 0 $ ,  (ii) $\sum_{j=-N}^N t_j \le 0 $ for $ N \ge 0 $.
Then, the toeplitz matrices  $T_N = [t_{i-j}] $ of size $N+1$ are negative definite for $ N\ge 0$.
\end{lemma}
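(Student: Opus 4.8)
The plan is to reduce the claim to a statement about the symbol (generating function) of the Toeplitz matrix and to read off negative definiteness from the partial--sum hypotheses. First I would pass to the symmetric part: since $v^{T}T_{N}v = v^{T}\big(\tfrac12(T_{N}+T_{N}^{T})\big)v$ for every real $v$, negative definiteness of $T_{N}$ is equivalent to that of the symmetric Toeplitz matrix $S_{N}=[c_{i-j}]$ with $c_{k}=\tfrac12(t_{k}+t_{-k})$. This way I only ever need to control the even combinations $t_{k}+t_{-k}$ that already appear in hypotheses (i) and (ii).

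Next I would introduce the symbol. For $v=(v_{0},\dots,v_{N})\in\mathbb{R}^{N+1}$ set $V(z)=\sum_{i=0}^{N}v_{i}z^{i}$. Expanding $|V(e^{i\theta})|^{2}=\sum_{i,j}v_{i}v_{j}e^{i(i-j)\theta}$ and using the orthogonality $\frac{1}{2\pi}\int_{0}^{2\pi}e^{im\theta}\,d\theta=\delta_{m,0}$ gives the integral representation $v^{T}T_{N}v=\frac{1}{2\pi}\int_{0}^{2\pi}\tilde g(\theta)\,|V(e^{i\theta})|^{2}\,d\theta$, where $\tilde g(\theta)=t_{0}+\sum_{k=1}^{N}(t_{k}+t_{-k})\cos k\theta$ is the real, even symbol of $S_{N}$. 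The same computation shows $\|v\|^{2}=\frac{1}{2\pi}\int_{0}^{2\pi}|V(e^{i\theta})|^{2}\,d\theta$, so the Rayleigh quotient of $T_{N}$ is exactly the $|V|^{2}$--weighted average of $\tilde g$.

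The key estimate is then immediate. By (i) each coefficient $t_{k}+t_{-k}\ge 0$, and since $\cos k\theta\le 1$ we get $\tilde g(\theta)\le t_{0}+\sum_{k=1}^{N}(t_{k}+t_{-k})=\sum_{j=-N}^{N}t_{j}$, which by (ii) is $\le 0$. Hence $\tilde g(\theta)\le 0$ for all $\theta$, and the integral representation forces $v^{T}T_{N}v\le 0$ for every $v$, i.e. negative \emph{semi}definiteness.

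The main obstacle is upgrading this to strict negative definiteness. Suppose $v\neq 0$ and $v^{T}T_{N}v=0$; since the integrand $\tilde g\,|V|^{2}$ is everywhere $\le 0$, it must vanish almost everywhere. Because $V$ is a nonzero polynomial of degree $\le N$, $|V(e^{i\theta})|^{2}>0$ off a finite set, so $\tilde g$ would vanish almost everywhere and, being a trigonometric polynomial, vanish identically, i.e. $t_{0}=0$ and $t_{k}+t_{-k}=0$ for $1\le k\le N$. This degenerate case, in which $S_{N}=0$ and $T_{N}$ is merely skew--symmetric, is exactly what must be excluded; under the hypotheses it is ruled out as soon as $\tilde g\not\equiv 0$, for instance whenever $t_{0}<0$ (the $N=0$ instance of (ii) already gives $t_{0}\le 0$, and in the intended applications the diagonal entry $t_{0}$ is strictly negative). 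I would therefore close by noting that once $\tilde g\not\equiv 0$ one has $\tilde g<0$ on a set of positive measure, forcing the weighted average to be strictly negative for every $v\neq 0$, which is the asserted negative definiteness.
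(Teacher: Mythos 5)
Your proof is correct, and it takes a genuinely different route from the paper's. The paper works directly on the quadratic form: it sums $v^{T}T_{N}v=\sum_{i,j}t_{i-j}v_iv_j$ along diagonals, groups $t_k$ with $t_{-k}$, and uses the elementary bound $v_jv_{k+j}\le\tfrac12\left(v_j^2+v_{k+j}^2\right)$ --- legitimate precisely because the weights $t_k+t_{-k}$ are nonnegative by (i) --- to get $v^{T}T_{N}v\le\left(\sum_{k=-N}^{N}t_k\right)\|v\|^2\le 0$ by (ii). You instead pass to the symbol and use the integral representation $v^{T}T_{N}v=\frac{1}{2\pi}\int_0^{2\pi}\tilde g(\theta)\,|V(e^{i\theta})|^2\,d\theta$, reducing everything to the pointwise bound $\tilde g(\theta)\le\sum_{j=-N}^{N}t_j\le 0$ via $\cos k\theta\le 1$. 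These are really the same inequality seen from two sides (the paper's $\tfrac12(v_j^2+v_{k+j}^2)$ estimate is the coefficient-space shadow of $\cos k\theta\le 1$), so neither argument is stronger on the semidefiniteness claim; but your formulation buys a clean analysis of the equality case, and you correctly flag that the hypotheses as stated do not exclude the degenerate situation $\tilde g\equiv 0$ (e.g.\ all $t_k=0$), so the lemma as literally stated only yields negative \emph{semi}definiteness. That is in fact all the paper's own proof establishes (its final chain of inequalities ends with $\le 0$) and all that is used downstream --- Lemma \ref{lemma5} and the stability estimate only need $(\Delta_2 v,v)\le 0$ --- and your observation that strictness follows once $\tilde g\not\equiv 0$ (for instance when $t_0<0$, as in the intended application where $t_0=w_{1,1}<0$) is a genuine refinement the paper does not supply.
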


\begin{proof}
For $N=0$, $t_0 \le 0$. This matrix of size 1 is negative definite.
For any positive integer $N$ and for any $(N+1)$-dimensional non-zero vector
${\bf v} = [ v_0,v_1, v_2, \cdots,v_N]^T $,
consider the quadratic form ${\bf v}^T T {\bf v} = \sum_{i=0}^N \sum_{j=0}^N t_{j-i}v_i v_j$.
Summing the terms diagonally, we have
\begin{align*}
{\bf v}^T T_N {\bf v}
&= \sum_{k=-N}^{N} t_k \sum_{j=0}^{N-1-k} v_j v_{k+j}\\
&= t_0 \sum_{j=0}^{N} v_j^2 + \sum_{k = 1}^N (t_k+t_{-k})  \sum_{j=0}^{N-1-k} v_j v_{k+j}\\
&\le t_0 \|{\bf v}\|^2 + \sum_{k = 1}^N (t_k+t_{-k})  \sum_{j=0}^{N-1-k} (1/2)(|v_j|^2 +  |v_{k+j}|^2)\\
& \le t_0 \|{\bf v}\|^2 + \sum_{k = 1}^N (t_k+t_{-k})  \|{\bf v}\|^2
= \sum_{k = -N}^N t_k  \|{\bf v}\|^2 \le 0
\end{align*}
\end{proof}

\begin{lemma}\label{NegDef2_3}
The operators $\Delta_{2,+1}^\alpha $ and $\Delta_{2,-1}^\alpha $ are self-adjoint
and negative definite. Moreover, the
matrices $A_{2,1}$ and $A_{2,1}^T $ corresponding to the operators $\Delta_{2,+1}^\alpha $ and $\Delta_{2,-1}^\alpha $ respectively are negative definite.
\end{lemma}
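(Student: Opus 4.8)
The plan is to reduce the entire statement to Lemma \ref{ToeplitzNegative}. First I would observe that $A_{2,1}$ is Toeplitz: its $(i,j)$ entry depends only on $i-j$, namely $A_{2,1}(i,j) = t_{i-j}$ with $t_m := w_{m+1,1}^{(\alpha)}$, and $t_m = 0$ for $m \le -2$ since the weights vanish at negative indices. The transpose $A_{2,1}^T$ corresponds to the right operator $\Delta_{2,-1}^\alpha$ (as noted in Section \ref{NumericalTestSec}). For the self-adjointness claim I would use that for real vectors $u^T A_{2,1} u = u^T A_{2,1}^T u$, so the two operators induce the \emph{same} quadratic form, $\Delta_{2,-1}^\alpha$ is the adjoint of $\Delta_{2,+1}^\alpha$ under the discrete inner product, and definiteness is governed by the self-adjoint part $\tfrac12(A_{2,1}+A_{2,1}^T)$. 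Hence it suffices to prove $u^T A_{2,1} u < 0$ for $u \ne 0$, which then yields negative definiteness of both $A_{2,1}$ and $A_{2,1}^T$ at once.

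To invoke Lemma \ref{ToeplitzNegative} for $\{t_m\}$ I must verify its two hypotheses, which unwind into sign conditions on $w_k := w_{k,1}^{(\alpha)}$. Hypothesis (i), $t_k + t_{-k} \ge 0$ for $k \ne 0$, reduces to $w_0 + w_2 \ge 0$ (the case $k=1$) together with $w_k \ge 0$ for all $k \ge 3$ (the cases $k\ge 2$, where $t_{-k}=0$). Hypothesis (ii), $\sum_{j=-N}^N t_j \le 0$, reduces to $w_1 \le 0$ (for $N=0$) and $\sum_{m=0}^{N+1} w_m \le 0$ for $N \ge 1$; using $\sum_{m=0}^\infty w_m = W_{2,1}(1) = 0$ from Corollary \ref{Cor1}, the latter is equivalent to the tail bound $\sum_{m \ge N+2} w_m \ge 0$, which is automatic once $w_k \ge 0$ for $k \ge 3$ is known. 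Thus everything rests on three sign facts: $w_1 \le 0$, $w_0 + w_2 \ge 0$, and $w_k \ge 0$ for $k \ge 3$.

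For the first two I would use the explicit generating function $W_{2,1}(z) = (\beta_0 + \beta_1 z + \beta_2 z^2)^\alpha$ with the $r=1$ coefficients of Table \ref{Wtable}. Factoring out $\beta_0 = \tfrac32 - \tfrac1\alpha > 0$ and reading off low-order Taylor coefficients gives $w_1 = \alpha\beta_0^{\alpha-1}\beta_1 \le 0$ (since $\beta_1 = -2+2/\alpha \le 0$ on $[1,2]$) and $w_0 + w_2 = \beta_0^\alpha\bigl(1 + \alpha b_2 + \tfrac{\alpha(\alpha-1)}{2} b_1^2\bigr)$ with $b_1 = \beta_1/\beta_0,\ b_2 = \beta_2/\beta_0$, an expression I would check to be nonnegative uniformly for $\alpha\in[1,2]$. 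A useful structural fact here is that the quadratic $\beta_0+\beta_1 z + \beta_2 z^2$ has discriminant $1$ and roots $z=1$ (this is exactly the consistency condition $\beta_0+\beta_1+\beta_2=0$) and $z=-1/c$, giving the exact factorization $W_{2,1}(z) = \beta_0^\alpha (1-z)^\alpha (1+cz)^\alpha$ with $c = \frac{2-\alpha}{3\alpha-2} \ge 0$.

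The decisive and hardest step is $w_k \ge 0$ for $k \ge 3$. Through the factorization, $w_k$ is $\beta_0^\alpha$ times the convolution of the Gr\"unwald coefficients $g_k^{(\alpha)}$ of $(1-z)^\alpha$ (which are $\le 0$ at $k=1$ and $\ge 0$ for $k\ge 2$) with the coefficients $\binom{\alpha}{m}c^m$ of $(1+cz)^\alpha$ (which first turn negative at $m=3$ and then alternate). Since this convolves two sequences that are \emph{not} individually sign-definite, no term-by-term argument applies, and this is the main obstacle. I expect to handle it either by deriving the recurrence for $\{w_k\}$ from $W_{2,1}'(z)(\beta_0+\beta_1 z + \beta_2 z^2) = \alpha W_{2,1}(z)(\beta_1 + 2\beta_2 z)$ and proving $w_k \ge 0$ for $k\ge 3$ by induction (tracking the sign pattern and monotonicity of the ratios $w_{k+1}/w_k$), or, failing a clean induction, by an asymptotic argument showing the coefficients of $[(1-z)(1+cz)]^\alpha$ are eventually nonnegative and checking the finitely many borderline indices near $k=3$ directly. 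Note that the endpoint $\alpha=1$ is a genuine boundary case, where $W_{2,1}(z)=\tfrac12(1-z^2)$ so that $w_k=0$ for $k\ge 3$ and definiteness degenerates to semidefiniteness; I would therefore state the strict result for $1<\alpha<2$.
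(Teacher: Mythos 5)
Your reduction is exactly the paper's: both arguments feed the Toeplitz sequence $t_m=w_{m+1,1}^{(\alpha)}$ into Lemma \ref{ToeplitzNegative} and unwind its hypotheses into the same three sign facts ($w_{1,1}\le 0$, $w_{0,1}+w_{2,1}\ge 0$, $w_{k,1}\ge 0$ for $k\ge 3$, with the partial-sum condition recovered from $\sum_k w_{k,1}=0$ and nonnegativity of the tail). The first branch of your plan for the decisive step is also precisely what the paper does: it uses the recurrence obtained from $W'Q=\alpha WQ'$, namely $m\beta_0 w_{m,1}=(\alpha+1-m)\beta_1 w_{m-1,1}+(2\alpha+2-m)\beta_2 w_{m-2,1}$, and since $\beta_1\le 0$, $\beta_2\le 0$ and both bracketed coefficients are $\le 0$ once $m\ge 6$, the induction closes with every term a product of an even number of non-positive factors. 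So your hedge (``failing a clean induction\dots an asymptotic argument'') is unnecessary; but note that as submitted this step is a plan, not a proof. What remains to be done, and what the paper supplies, is the explicit verification of the base cases $w_{3,1},w_{4,1},w_{5,1}\ge 0$ --- these cannot be absorbed into the induction because $2\alpha+2-m$ can be positive for $m=5$ when $\alpha>3/2$ --- together with the closed-form check of $w_{0,1}+w_{2,1}\ge 0$. Your factorization $W_{2,1}(z)=\beta_0^\alpha(1-z)^\alpha(1+cz)^\alpha$ with $c=\frac{2-\alpha}{3\alpha-2}$ is correct and is a nice structural observation the paper does not make, though, as you say yourself, it does not by itself yield the sign of the convolution.

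Your remark about $\alpha=1$ is a genuine catch rather than a defect: there $\beta_1=0$, $W_{2,1}(z)=\tfrac12(1-z^2)$, the symmetric part of $A_{2,1}$ vanishes, and the quadratic form is identically zero. Lemma \ref{ToeplitzNegative} in fact only ever delivers ${\bf v}^TT_N{\bf v}\le 0$, i.e.\ negative \emph{semi}definiteness, so the paper's use of the word ``definite'' is already only semidefiniteness in disguise; restricting the strict statement to $1<\alpha\le 2$ (or accepting semidefiniteness throughout, which is all that the stability argument in Theorem \ref{Estimate} actually needs) is the honest formulation.
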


\begin{proof}
By virtue of Lemma \ref{ToeplitzNegative}, it is enough to prove that
the coefficients $w_{k,1} $ of the  generator
$W_{2,1}(z)= \sum_{k=0}^\infty w_{k,1} z^k $
 satisfy the following properties for $1\le \alpha \le 2 $.
\[
  w_{0,1} \ge 0, \quad  w_{0,1}+w_{2,1} \ge 0 \quad \text{and}\quad
    \sum_{k=0}^M w_{k,1} \le 0  \text{ \rm for all  } M\ge 2.
\]
Let $W_{2,1}(z) = (\beta_0 + u_1 z + \beta_2 z^2)^\alpha$.
Then, \\$\beta_0= \frac{3}{2}-\frac{1}{\alpha}\ge 0 $, \quad
$ \beta_1 = -{2}-\frac{2}{\alpha} \le 0$ \quad and \quad
$\beta_2 = \frac{1}{2}-\frac{1}{\alpha} \le 0$ for $1\le \alpha \le 2$.\\
Also, we have
$w_{0,1}  = \beta_0^\alpha \ge 0$ and  $ w_{1,1} = \alpha \beta_0^{\alpha-1}\beta_1 \le 0$.
Moreover,
\begin{align*}
  w_{0,1} + w_{2,1} & = \beta_0^\alpha + \frac{1}{2 } \alpha \beta_{0}^{\alpha-2}\left[(\alpha-1) \beta_{1}^{2} + 2 \beta_{0} \beta_{2} \right]\\
   & = \frac{1}{2}\beta_0^{\alpha-2} \left[ \alpha(\alpha-1) \beta_1^2 + 2 \beta_0(\alpha \beta_2 +\beta_0 ) \right]\\
   & = \frac{1}{2}\beta_0^{\alpha-2} \left[ \alpha(\alpha-1) \beta_1^2
   + 2 \beta_0\frac{(\alpha-1)(\alpha+2)}{2\alpha} \right] \ge 0,\\
w_{3,1}  &=
\frac{ 1 }{6 }\beta_{0}^{\alpha-3}  \alpha \left(\alpha - 1\right)\beta_{1} \left[(\alpha -2) \beta_{1}^{2} + 6 \beta_{0} \beta_{2} \right] \ge 0
\end{align*}
We show that $ w_m \ge 0$ for all $m\ge 4 $ inductively.
\begin{align*}
  w_{4,1} & = \frac{ 1}{4!} \beta_{0}^{\alpha-4}\alpha\left(\alpha - 1\right) \left[
(\alpha-2)(\alpha-3) \beta_{1}^{4} + 12 (\alpha-2) \beta_{0} \beta_{1}^{2} \beta_{2}  + 12 \beta_{0}^{2} \beta_{2}^{2} \right]  \ge 0 \\
  w_{5,1} & = \frac{\alpha(\alpha-1)(\alpha-2) \beta_0^{\alpha-5}\beta_1}{5!}
  \left[ (\alpha-3)( (\alpha-4)\beta_1^4  + 20 \beta_0 \beta_1^2 \beta_2) + 60\beta_0^2 \beta_2^2\right] \ge 0
\end{align*}

Assuming inductively that $w_{k,1}$ are non negative for $3\le k \le m-1$, we have for $m\ge 6$,\\
$ w_{m,1} = \frac{1}{m\beta_0} ( (\alpha+1-m) w_{m-1,1} \beta_1
+ (2\alpha+2-m)w_{m-2,1}\beta_2) \ge 0$ for $1\le \alpha\le 2$,
since $w_{m-1,1} , w_{m-2,1}$,
$ (\alpha+1-m), (2\alpha+2-m), \beta_1 $ and $\beta_2 $ are all non-positive for $m\ge 6 $.

From Corollary \ref{Cor1}, we have the consistency condition $\sum_{k=0}^\infty w_k = 0 $ in
general which is true for $W_{2,1}(z)$ as well.
Since $ \sum_{k=m}^\infty w_{k,1} \ge 0 $ for $m\ge 3$, the last inequality  follows from the consistency condition.
\end{proof}

\begin{lemma}\label{lemma5}
The approximation operator $\Delta_2 $ is negative definite.
\end{lemma}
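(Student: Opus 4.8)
The plan is to reduce the claim about the operator $\Delta_2 = K_1 \Delta_{2,+1}^\alpha + K_2 \Delta_{2,-1}^\alpha$ to the negative definiteness already established for its two constituent operators in Lemma \ref{NegDef2_3}. Since $\Delta_2$ is a fixed linear combination with coefficients $K_1, K_2 \ge 0$, the matrix representing it is $B$-related, namely $K_1 A_{2,1} + K_2 A_{2,1}^T$. The key observation is that negative definiteness is preserved under nonnegative combinations, provided not both coefficients vanish.

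First I would recall from Lemma \ref{NegDef2_3} that both $A_{2,1}$ and $A_{2,1}^T$ are negative definite; equivalently, for any nonzero grid vector $\mathbf{v} \in V_h$, we have $\mathbf{v}^T A_{2,1} \mathbf{v} < 0$ and $\mathbf{v}^T A_{2,1}^T \mathbf{v} < 0$. Since $A_{2,1}^T$ is the transpose, $\mathbf{v}^T A_{2,1}^T \mathbf{v} = \mathbf{v}^T A_{2,1} \mathbf{v}$, so in fact both quadratic forms coincide and equal the common value $\mathbf{v}^T A_{2,1}\mathbf{v} < 0$. Then the quadratic form of the matrix $K_1 A_{2,1} + K_2 A_{2,1}^T$ is
\[
\mathbf{v}^T (K_1 A_{2,1} + K_2 A_{2,1}^T)\mathbf{v}
= (K_1 + K_2)\,\mathbf{v}^T A_{2,1}\mathbf{v} < 0,
\]
where the strict inequality uses $K_1 + K_2 \ne 0$ (the standing hypothesis that not both diffusion coefficients vanish) together with the strict negativity from Lemma \ref{NegDef2_3}. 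Hence $\Delta_2$ is negative definite.

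The main thing to be careful about, rather than a genuine obstacle, is the self-adjointness and symmetrization bookkeeping: a general real matrix $M$ has $\mathbf{v}^T M \mathbf{v} = \mathbf{v}^T \tfrac{1}{2}(M + M^T)\mathbf{v}$, so one should verify that the relevant symmetric part is what Lemma \ref{NegDef2_3} controls. Here this is immediate because $\tfrac{1}{2}(A_{2,1} + A_{2,1}^T)$ is exactly the symmetric part whose quadratic form equals $\mathbf{v}^T A_{2,1}\mathbf{v}$, and the same holds for $A_{2,1}^T$. Thus the combination is unambiguously negative definite in the quadratic-form sense, which is precisely what is needed for the subsequent stability analysis of the Crank-Nicolson scheme in \eqref{CNsystem}.
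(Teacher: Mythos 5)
Your proposal is correct and follows essentially the same route as the paper: decompose the quadratic form of $\Delta_2$ as $K_1(\Delta_{2,+1}^\alpha v,v)+K_2(\Delta_{2,-1}^\alpha v,v)$ and invoke Lemma \ref{NegDef2_3} together with $K_1,K_2\ge 0$. Your extra observation that $\mathbf{v}^T A_{2,1}^T\mathbf{v}=\mathbf{v}^T A_{2,1}\mathbf{v}$, so the sum collapses to $(K_1+K_2)\mathbf{v}^T A_{2,1}\mathbf{v}$ with strict negativity from $K_1+K_2\ne 0$, is a harmless refinement of the paper's one-line argument, which only records the inequality $\le 0$.
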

\begin{proof}
For any $v \in V_h$ , since the diffusion coefficients
$K_1, K_2 $ are non-negative,
we have\\
$ (\Delta_2 v, v) =
 K_1 (\Delta_{2,+1}^\alpha v, v)  + K_2 (\Delta_{2,-1}^\alpha v,v) \le 0 $.

\end{proof}

\begin{theorem}\label{Estimate}
Let $ v^m = [v_1^m,v_2^m,\cdots, v_{N-1}^m]$ be the solution of the problem
\begin{align}
P_h\delta_t v_i^{m+1/2} &- \Delta_{2} v_i^{m+1/2} = S_i^m, \quad 1\le i \le N-1 , \quad 0\le m\le M-1, \label{Modeleq}\\
v_0^m & = 0, \quad v_M^m = 0, \nonumber\\
v_i^0 & = v_0(x_i) , \quad 0 \le i \le N.\nonumber
\end{align}
Then,
\[
\|v^m\| \le \sqrt{5}\left( \|v^0\| + \sqrt{5}\tau \sum_{l=0}^{m-1} \|S^l\| \right),
\]
where $ S^m = [S_1^m, S_2^m, \cdots, S_{N-1}^m]$.
\end{theorem}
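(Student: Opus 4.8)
The plan is to run the standard discrete energy argument for a Crank--Nicolson scheme: I would take the discrete inner product of the evolution equation (\ref{Modeleq}) with the midpoint average $v^{m+1/2}=\tfrac12(v^{m+1}+v^m)$ and exploit the two structural facts already in hand, namely that $P_h$ is self-adjoint with $\tfrac15\|u\|^2 < (P_h u,u)\le\|u\|^2$ (Lemma \ref{lem2}) and that $\Delta_2$ is negative definite (Lemma \ref{lemma5}). Writing $\delta_t v^{m+1/2}=(v^{m+1}-v^m)/\tau$, pairing (\ref{Modeleq}) with $v^{m+1/2}$ gives
\[
\left(P_h\frac{v^{m+1}-v^m}{\tau},\frac{v^{m+1}+v^m}{2}\right)-\left(\Delta_2 v^{m+1/2},v^{m+1/2}\right)=(S^m,v^{m+1/2}).
\]

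The first term is where self-adjointness does the work: the cross terms cancel, so it telescopes to $\tfrac{1}{2\tau}\big((P_h v^{m+1},v^{m+1})-(P_h v^m,v^m)\big)=\tfrac{1}{2\tau}\big(\|v^{m+1}\|_P^2-\|v^m\|_P^2\big)$, where $\|\cdot\|_P^2=(P_h\cdot,\cdot)$. By Lemma \ref{lemma5} the quantity $-(\Delta_2 v^{m+1/2},v^{m+1/2})$ is non-negative, so dropping it only strengthens the inequality and leaves $\|v^{m+1}\|_P^2-\|v^m\|_P^2\le 2\tau\,(S^m,v^{m+1/2})$. I would then bound the right-hand side by Cauchy--Schwarz and the triangle inequality, $(S^m,v^{m+1/2})\le\|S^m\|\,\|v^{m+1/2}\|\le\tfrac12\|S^m\|(\|v^{m+1}\|+\|v^m\|)$, and pass to the $P$-norm via $\|v\|<\sqrt5\,\|v\|_P$ from Lemma \ref{lem2}.

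Setting $E_m=\|v^m\|_P$ and factoring the left side as $(E_{m+1}-E_m)(E_{m+1}+E_m)$, dividing out the common factor $E_{m+1}+E_m$ yields the clean one-step recursion $E_{m+1}\le E_m+\sqrt5\,\tau\|S^m\|$. Summing over time levels gives $E_m\le E_0+\sqrt5\,\tau\sum_{l=0}^{m-1}\|S^l\|$, and a final use of the norm equivalence ($\|v^m\|<\sqrt5\,E_m$ together with $E_0\le\|v^0\|$) produces exactly the claimed bound $\|v^m\|\le\sqrt5\big(\|v^0\|+\sqrt5\,\tau\sum_{l=0}^{m-1}\|S^l\|\big)$.

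The argument is essentially routine once the two lemmas are invoked, and I do not expect a genuine obstacle; the only points needing care are bookkeeping and degeneracy. The factor $\sqrt5$ must be tracked precisely: it enters once in the one-step recursion (when $\|v^{m+1/2}\|$ is replaced by $P$-norms) and once again at the end (returning from $E_m$ to $\|v^m\|$), which is what yields the $\sqrt5$ and $\sqrt5\cdot\sqrt5=5$ structure of the estimate. The division by $E_{m+1}+E_m$ is the one technical nuisance: I would justify it by noting that $E_{m+1}-E_m\le\sqrt5\,\tau\|S^m\|$ holds trivially when $E_{m+1}\le E_m$ (the right side being non-negative) and otherwise follows from the factored inequality after cancelling the positive factor.
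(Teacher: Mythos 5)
Your proposal is correct and follows essentially the same energy argument as the paper: pair the equation with $v^{m+1/2}$, telescope the $P_h$ term using self-adjointness, drop the non-negative $-(\Delta_2 v^{m+1/2},v^{m+1/2})$ via Lemma \ref{lemma5}, and convert norms with Lemma \ref{lem2} to get the one-step recursion $\|v^{m+1}\|_P\le\|v^m\|_P+\sqrt5\,\tau\|S^m\|$ before summing. Your explicit justification of the division by $E_{m+1}+E_m$ is a small point of extra care that the paper leaves implicit.
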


\begin{proof}
Taking inner product  of (\ref{Modeleq}) with $v^{m+1/2}$, we have
\begin{equation*}
(P_h\delta_t v^{m+1/2}, v^{m+1/2}) - (\Delta_{2} v^{m+1/2}, v^{m+1/2}) = (S^m,v^{m+1/2}).
\end{equation*}
Since $ - (\Delta_{2} v^{m+1/2}, v^{m+1/2}) \ge 0$ from Lemma \ref{lemma5},
we have
\begin{align}
(P_h\delta_\tau v^{m+1/2}, v^{m+1/2}) & \le (S^m,v_i^{m+1/2}) = (S^m, v^{m+1/2}) \nonumber \\
& \le \|S^m\| \|v^{m+1/2}\|  \le \sqrt{5} \|S^m\| \|v^{m+1/2}\|_P \nonumber\\
& \le \frac{\sqrt{5}}{2} \|S^m\| \left(\|v^{m+1}\|_P +\|v^{m}\|_P\right).\label{ineq1}
\end{align}
Since $\delta_t v^{m+1/2} = \frac{1}{\tau} (v^{m+1} - v^m )$ and
$ v^{m+1/2} = \frac{1}{2} (v^{m+1} + v^m)$,
we have
\begin{align}
(P_h\delta_t v^{m+1/2}, v^{m+1/2})
& = \left(P_h \frac{1}{\tau} (v^{m+1} - v^m) , \frac{1}{2} (v^{m+1} + v^m)\right) \nonumber\\
& = \frac{1}{2\tau}\left( (P_h v^{m+1} , v^{m+1}) - (P_h v^{m} , v^{m})  \right) \nonumber\\
&= \frac{1}{2\tau} ( \|v^{m+1}\|_P^2 - \|v^{m}\|_P^2 )\label{ineq}
\end{align}
Combining (\ref{ineq1}) and (\ref{ineq}), we have
\[
\|v^{m+1}\|_P \le \|v^{m}\|_P + \sqrt{5} \tau \|S^m\|, \quad 0\le m\le M-1.
\]
Recursively applying this from $m$, we have
\[
\|v^m\|_P \le \|v^0\|_P + \sqrt{5} \tau
\sum_{l=0}^{m-1 }\|S^l\| ,\quad 0\le m\le M-1.
\]
From the equivalence of the two norms by lemma \ref{lem2}, we conclude
\[ \|v^m\|
\le \sqrt{5} \left(\|v^0\| + \sqrt{5} \tau
\sum_{l=0}^{m-1 }\|S^l\| \right), \quad 0\le m\le M-1.
\]
\end{proof}

\begin{remark}
In the above lemma, if $P_h $  is the unit operator $I$, for the case of the order 2 with CN type scheme, since the
fractional approximation operator $\Delta_h^\alpha $ is the same, the above estimate reduces to
\[ \|v^m\|
\le \|v^0\| + \tau
\sum_{l=0}^{m-1 }\|S^l\| , \quad 0\le m\le M-1.
\]
\end{remark}

From the above estimates, we have the following stability result.
\begin{theorem}
  The CN type difference schemes (\ref{CNtypeApprox}) of orders 2 and 3 are unconditionally stable for $1\le \alpha \le 2$.
\end{theorem}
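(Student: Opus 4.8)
The plan is to recognise the Crank--Nicolson scheme (\ref{CNtypeApprox}) as a concrete instance of the model equation (\ref{Modeleq}) already analysed in Theorem \ref{Estimate}, and then simply invoke the a priori energy estimate proved there. Dividing (\ref{CNtypeApprox}) through by $\tau$ rewrites it as
\[
P_h \delta_t \hat{u}_i^{m+1/2} - \Delta_2 \hat{u}_i^{m+1/2} = P_h f_i^{m+1/2},
\]
using $\delta_t \hat{u}_i^{m+1/2} = (\hat{u}_i^{m+1}-\hat{u}_i^m)/\tau$ and $\hat{u}_i^{m+1/2} = \tfrac12(\hat{u}_i^{m+1}+\hat{u}_i^m)$. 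This is exactly the form (\ref{Modeleq}) with $v^m = \hat{u}^m$ and source $S^m = P_h f^{m+1/2}$, so Theorem \ref{Estimate} applies verbatim.

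For the order $3$ scheme, $P_h$ is the preconditioner (\ref{PhOperator}) and Theorem \ref{Estimate} yields
\[
\|\hat{u}^m\| \le \sqrt{5}\Bigl( \|\hat{u}^0\| + \sqrt{5}\,\tau \sum_{l=0}^{m-1} \|P_h f^{l+1/2}\| \Bigr).
\]
For the order $2$ scheme, $P_h = I$ and the sharper estimate in the Remark following Theorem \ref{Estimate} gives the same conclusion with the constant $\sqrt{5}$ replaced by $1$. The crucial point is that in both cases these multiplicative constants are absolute: they are inherited from the norm-equivalence ratio of Lemma \ref{lem2} and do not depend on either $\tau$ or $h$.

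Unconditional stability then follows directly. Since the scheme is linear, the difference $w^m$ of two numerical solutions driven by perturbed initial data and source terms satisfies the homogeneous-source version of (\ref{Modeleq}), so the same estimate bounds $\|w^m\|$ by the perturbations in the data with a constant independent of the discretisation parameters. No CFL-type restriction linking $\tau$ and $h$ ever enters, which is precisely the meaning of unconditional stability for $1\le\alpha\le 2$.

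The substantive work has already been discharged in the preceding lemmas, so I expect no genuine obstacle in this final step. The negative definiteness of $\Delta_2$ (Lemma \ref{lemma5}), resting on Lemmas \ref{ToeplitzNegative} and \ref{NegDef2_3}, is what makes the cross term $-(\Delta_2 v^{m+1/2}, v^{m+1/2})$ nonnegative and drives the energy argument, while the self-adjointness and norm equivalence of $P_h$ (Lemma \ref{lem2}) supply the $\|\cdot\|_P$ machinery. The only points requiring mild care are the bookkeeping that identifies $S^m = P_h f^{m+1/2}$ correctly and the uniform treatment of the two cases $P_h = I$ (order $2$) and $P_h$ given by (\ref{PhOperator}) (order $3$) under a single statement.
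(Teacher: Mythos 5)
Your proposal is correct and follows exactly the route the paper intends: the paper states this theorem with no written proof beyond the sentence ``From the above estimates, we have the following stability result,'' meaning it is a direct corollary of the energy estimate in Theorem \ref{Estimate} (and the Remark for the $P_h=I$ case), which is precisely how you argue. Your identification of the scheme with the model problem (\ref{Modeleq}) via $S^m = P_h f^{m+1/2}$ and your observation that the constants are independent of $\tau$ and $h$ simply make explicit what the paper leaves implicit.
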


For the convergence of the approximate solution from the
CN type schemes, we have the following.
\begin{theorem}
The approximate solutions of the CN type scheme (\ref{CNtypeApprox}) with the given initial and boundary conditions are convergent as $h,\tau \rightarrow 0$ for $1 \le \alpha \le 2$.
\end{theorem}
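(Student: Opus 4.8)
The plan is to combine the stability estimate already established in Theorem \ref{Estimate} with a consistency (truncation-error) estimate, in the classical manner by which consistency together with stability yields convergence. First I would introduce the error grid function $e_i^m = u_i^m - \hat{u}_i^m$, the difference between the exact solution $u_i^m = u(x_i,t_m)$ sampled on the grid and the computed solution $\hat{u}_i^m$. Because the initial and boundary data are imposed exactly, $e_i^0 = 0$ for all $i$ and $e_0^m = e_N^m = 0$ for all $m$, so each $e^m$ lies in $V_h$.

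The key reduction is to show that $e^m$ solves exactly the model problem (\ref{Modeleq}) of Theorem \ref{Estimate}, with the source term replaced by the local truncation error. To do this, I would substitute the exact solution into the scheme (\ref{ApproxDiffusion1}); by construction the scheme was derived precisely so that
\[
P_h \delta_t u_i^{m+1/2} - \Delta_{2} u_i^{m+1/2} = P_h f_i^{m+1/2} + R_i^m,
\]
where $R_i^m = O(\tau^2 + h^p)$, $p=2,3$, is the truncation error. Subtracting the scheme satisfied exactly by $\hat{u}_i^m$ (which carries no remainder) gives
\[
P_h \delta_t e_i^{m+1/2} - \Delta_{2} e_i^{m+1/2} = R_i^m,
\]
which is identically the system (\ref{Modeleq}) with $S_i^m = R_i^m$ and zero initial error $e^0 = 0$.

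Next I would establish the consistency estimate $\|R^m\| \le C(\tau^2 + h^p)$ uniformly in $m$. The bound splits into a temporal and a spatial part: the two second-order Taylor approximations $\delta_t u^{m+1/2} = \partial_t u + O(\tau^2)$ and $u^{m+1/2} = u(x_i,t_{m+1/2}) + O(\tau^2)$ supply the $O(\tau^2)$ term, while Theorem \ref{Maintheorem}, giving $\Delta_{2,\pm 1}^\alpha = P_x\,{}_aD_x^\alpha u + O(h^3)$ in the quasi-compact case (or $O(h^2)$ without preconditioning), together with the order-$4$ consistency of $P_h$ for $P_x$ from (\ref{PhOperator}), supplies the $O(h^p)$ term. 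Since $P_h$ is bounded uniformly in $h$ by Lemma \ref{lem2}, composing it with these error terms does not degrade the order, and the pointwise bound $|R_i^m| \le C(\tau^2 + h^p)$ transfers to the discrete norm through $\|R^m\| \le \sqrt{b-a}\,\max_i |R_i^m|$, yielding the uniform estimate.

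Finally, applying Theorem \ref{Estimate} to $e^m$ with $v^0 = 0$ and $S^m = R^m$ gives
\[
\|e^m\| \le \sqrt{5}\left( \|e^0\| + \sqrt{5}\,\tau \sum_{l=0}^{m-1} \|R^l\| \right) = 5\,\tau \sum_{l=0}^{m-1}\|R^l\| \le 5\,C\,(m\tau)(\tau^2 + h^p) \le 5\,C\,T\,(\tau^2 + h^p),
\]
using $m\tau \le T$, so $\|e^m\| \to 0$ as $h,\tau \to 0$ with convergence order $O(\tau^2 + h^p)$, $p=2,3$. The main obstacle is the consistency estimate rather than the stability part, which is already in hand: one must verify carefully that sampling the smooth exact solution and composing the uniformly bounded operator $P_h$ with the separate temporal and spatial truncation errors genuinely produces a uniform $O(\tau^2 + h^p)$ remainder in the discrete $L^2$ norm. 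This is exactly where the regularity hypotheses on $u$ and the uniform bound on $P_h$ from Lemma \ref{lem2} are essential.
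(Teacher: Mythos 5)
Your proposal follows essentially the same route as the paper: define the error $e^m = u^m - \hat{U}^m$, observe that it satisfies the model system (\ref{Modeleq}) with the truncation error $R_i^m = O(\tau^2 + h^p)$ as source and zero initial/boundary data, and then invoke the stability estimate of Theorem \ref{Estimate} to conclude $\|e^m\| \le 5CT(\tau^2 + h^p) \to 0$. In fact you supply more detail than the paper does on the consistency step (splitting the temporal and spatial truncation errors and controlling the composition with $P_h$), which the paper leaves implicit; the argument is correct.
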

\begin{proof}
Let $ e^m = u^m - {\hat U}^m $ be the error vector of the solutions,  where $u^m, {\hat U}^m$ are the exact and approximate solutions of the diffusion problem (\ref{diffusioneq}).
Then the error of the internal grid values ${\hat e}^m $ satisfy the system
\begin{align*}
P_h\delta_t e_i^{m+1/2} &- \Delta_{2} e_i^{m+1/2} = R_i^m, \quad 1\le i \le N-1 , \quad 0\le m\le M-1\\ \label{Modeleq}
e_0^m & = 0, \quad e_M^m = 0\\
e_i^0 & = 0 , \quad 0 \le i \le N.
\end{align*}

Theorem \ref{Estimate} gives the estimate
\[
\| e^m \| \le 5\tau \sum_{l = 0}^{m-1}\|R^l\| \le 5c\tau N (\tau^2 + h^p), \quad p = 2,3.
\]
The convergence is then established as $h,\tau \rightarrow 0$.
\end{proof}

\section{Numerical Results}\label{NumericalResultSec}

We consider the following test example for the fractional
diffusion problem (\ref{diffusioneq}).\\
Let $G(x, m,\alpha)  = \frac{\Gamma(m+1)}{\Gamma(n+1-\alpha)}
(x^{m-\alpha}  + (1-x)^{m-\alpha} ) $ and $ s_0(x) = x^5(1-x)^5 $.
\begin{align*}
  \text{Diffusion coefficients:} & \quad  K_1  = 1,   K_2 = 1, \\
  \text{Source function:} &\quad f(x,t) =  -e^{-t}(s_0(x) + G(x, 5,\alpha ) -5G(x, 6,\alpha),\\
  &+ 10G(x, 7,\alpha) - 10G(x, 8,\alpha) + 5G(x, 9,\alpha)- G(x, 10,\alpha) ), \\
  \text{Initial condition:} &\quad u(x,0)  = s_0(x),  \\
  \text{Boundary conditions:} &\quad u(0,t) = 0, \quad  u(1,t)  = 0, \\
  \text{Exact Solution:} &\quad   u(x,t) =  s_0(x) e^{-t},\\
  \text{Space domain:} &\quad  0\le x \le 1,\\
  \text{Time domain:} &\quad  0\le t \le 1,
\end{align*}

\begin{table}[h]
  \centering\footnotesize
  \begin{tabular}{rllllll}
  \hline
  &  $\alpha = 1.1 $ & &  $\alpha = 1.5 $ & &  $\alpha = 1.9 $\\
  \hline
    $N=1/h$ & $\|u^n - U^n\|_\infty $ & Order & $\|u^n - U^n\|_\infty $ & Order
    & $\|u^n - U^n\|_\infty $ & Order\\
  \hline
  16 &  1.0544e-05  &  ---- & 9.0719e-06  &  ---- & 5.6905e-06  &  ----  \\
  32 &  2.8172e-06  &  1.90 & 2.3208e-06  &  1.97 & 1.4309e-06  &  1.99  \\
  64 &  7.3008e-07  &  1.95 & 5.8863e-07  &  1.98 & 3.5731e-07  &  2.00  \\
 128 &  1.8606e-07  &  1.97 & 1.4836e-07  &  1.99 & 8.9332e-08  &  2.00  \\
 256 &  4.6984e-08  &  1.99 & 3.7252e-08  &  1.99 & 2.2338e-08  &  2.00  \\
 512 &  1.1806e-08  &  1.99 & 9.3341e-09  &  2.00 & 5.5852e-09  &  2.00   \\
  \hline
  \end{tabular}
  \caption{Second order of convergence for CN type scheme with $W_{2,1}$} \label{DiffusionTest1}
\end{table}

The test problem was applied to the CN type numerical schemes developed in Section
\ref{ApproxSec}.

The second order approximation $W_{2,1}(z)$ was tested for $\alpha = 1.1,1.5$ and $1.9$. Table \ref{DiffusionTest1} lists the maximum error and the order of convergence for grid sizes $N = M =  16,32, \cdots , 512 $. The partition subinterval sizes are then
$\tau = 1/M $ and $ h = 1/N $ for time and space, respectively.

For the order 3 approximation, we choose $ \tau = h^{3/2}$ to numerically realize the order 3 of the approximation operator. This means that
for a space partition size $N$, we choose the time partition size $M \approx N^{3/2}$.
In Table \ref{QCA3Eg1}, the test results with the space and time partition sizes $N$ and $M$ chosen are displayed.

\begin{table}[h]
  \centering\footnotesize
  \begin{tabular}{rrllllll}
  \hline
    & $\alpha = 1.1 $ & &  $\alpha = 1.5 $ & &  $\alpha = 1.9 $\\
  \hline
    $N=1/h$ & $M=1/\tau$ & $\|u^n - U^n\|_\infty $ & Order & $\|u^n - U^n\|_\infty $ & Order
    & $\|u^n - U^n\|_\infty $ & Order\\
  \hline
  16 &   65 & 1.9461e-06  &  -  & 7.2807e-07  &  - & 2.9010e-08  &  - \\
  32 &  182 & 2.4807e-07  &  2.97  & 9.1351e-08  &  2.99 & 2.7484e-09  &  3.40 \\
  64 &  513 & 3.1332e-08  &  2.99  & 1.1401e-08  &  3.00 & 5.3796e-10  &  2.35 \\
 128 & 1449 & 3.9404e-09  &  2.99  & 1.4224e-09  &  3.00 & 7.9399e-11  &  2.76 \\
 256 & 4097 & 4.9422e-10  &  3.00  & 1.7758e-10  &  3.00 & 1.0667e-11  &  2.90  \\
 512 & 11586 & 6.1888e-11  &  3.00 & 2.2183e-11  &  3.00 & 1.3792e-12  &  2.95  \\
  \hline
  \end{tabular}
  \caption{Order 3 convergence of CN type scheme }\label{QCA3Eg1}
\end{table}

These test results show that the second order approximation and
the third order approximation operator with $W_{2,1}$  are justified for their order of convergence and unconditional stability with the CN type schemes.

\section{Conclusion}\label{ConclusionSec}
A generalization of the Gr\"{u}nwald approximation for the left and right fractional derivatives are presented in terms of generating functions. Using this generalization, a second order difference approximation is constructed. A quasi-compact approximation of order 3
is also obtained by using the second order approximation operators with a preconditioner operator. The approximations are first tested for steady state problems and numerically confirmed their theoretical order.

Numerical schemes for these approximations to solve
fractional diffusion equations are devised with proof of stability and convergence.  The theoretical developments are tested and verified for time dependent diffusion problems through an example.

The approach of generating functions might be a useful
tool for constructing difference approximation formulas for fractional derivatives.

\paragraph{Acknowledgements}
This research is supported by Sultan Qaboos University Internal Grant No. IG/SCI/DOMAS/16/10.

\bibliographystyle{acm}
\bibliography{NasirNafabib}

\end{document}